\documentclass[11pt]{article}

\usepackage{amsmath,amssymb,amsthm}
\usepackage[margin=1in]{geometry}
\usepackage[colorlinks=true,linkcolor=blue,citecolor=blue,urlcolor=blue]{hyperref}

\newtheorem{theorem}{Theorem}[section]
\newtheorem{proposition}[theorem]{Proposition}
\newtheorem{lemma}[theorem]{Lemma}
\newtheorem{corollary}[theorem]{Corollary}
\newtheorem{example}[theorem]{Example}
\theoremstyle{remark}
\newtheorem{remark}[theorem]{Remark}

\newcommand{\PP}{\mathbb P}
\newcommand{\EE}{\mathbb E}

\newcommand{\e}{\mathrm e}
\newcommand{\1}{\mathbf 1}
\newcommand{\cR}{\mathcal R}
\newcommand{\cS}{\mathcal S}

\title{On Arithmetic-Progression Biases in Random Partitions:\\
       Limiting Laws and the Li--Zhao Conjecture}
\author{Jiyou Li and Sicheng Zhao\\[2pt]
        School of Mathematical Sciences, Shanghai Jiao Tong University,\\
        Shanghai, 200240, China\\[2pt]
        }
\date{}

\begin{document}

\maketitle

\begin{abstract}
Let $\cR=\{r+jm:j\geq0\}$ and $\cS=\{s+jm:j\geq0\}$ be disjoint
arithmetic progressions of positive integers, and choose a partition of $n$
uniformly at random.  We determine the limiting probability that the number
of parts in $\cR$ is larger than the number of parts in $\cS$.  More
precisely, if $a=r/m$ and $b=s/m$, then this probability converges to
$I_{1/2}(a,b)$, the regularized incomplete beta function.  We prove the
stronger distributional statement that, after multiplication by the
partition saddle-point parameter, the difference of the two numbers of
parts converges to $m^{-1}(\log G_b-\log G_a)$, where $G_a$ and $G_b$ are
independent gamma variables.  The proof uses the Boltzmann conditioning
device for random partitions, a $q$-gamma evaluation of the unconditioned
characteristic function, and a uniform Fourier estimate showing that
conditioning on the total size does not change the limit.  We also evaluate
the finite-truncation limit studied by Li and Zhao.  The two evaluations
coincide, proving their conjecture on the commutation of the truncation and
large-size limits.
\end{abstract}

\medskip
\noindent\textbf{2020 Mathematics Subject Classification.}
05A17, 60C05, 60F05, 33D05.

\smallskip
\noindent\textbf{Keywords.}
Integer partition, partition bias, Boltzmann measure, conditioning device,
local limit theorem, $q$-gamma function, beta distribution.

\section{Introduction}

Partition theory is one of the important subjects in combinatorics.  For a
positive integer $n$, a partition of $n$ is a non-increasing sequence of
positive integers whose sum is $n$.  We write a partition in frequency
notation as
\[
 \lambda=(1^{m_1}2^{m_2}\cdots),\qquad
 |\lambda|=\sum_{k\geq1}km_k,
\]
where $m_k$ is the multiplicity of the part $k$.  If $A$ is a set of
positive integers, then $N_A(\lambda)=\sum_{k\in A}m_k$ counts the parts
whose sizes lie in $A$, with repetitions included.

Recently biases in partitions have become an active subject in the study
of partitions.  By a bias, we mean the tendency of some parts to appear more
frequently than other parts.  Parity biases and biases between residue
classes have been studied, for example, in
\cite{Chern2022,KimKim2021,KimKimLovejoy2020}.  Related results for
unrestricted partitions, distinct partitions, and overpartitions were obtained
by Schlosser and Zhou \cite{SchlosserZhou2025}.  Andrews
\cite{Andrews1976} provides general background on partition theory, and
Li and Zhao \cite{LiZhao2026} provide further discussion of biases.

In their previous study, Li and Zhao considered three disjoint sets of
allowed parts $R,S,I$ and the proportion of admissible partitions for which
$N_R>N_S$.  In the case that $R,S,I$ are finite, they obtained an explicit
limiting ratio by comparing the number of integer points in a certain
polytope with its volume.

It is natural to consider the case that $R$ and $S$ are infinite arithmetic
progressions.  Let $C_{n,N}$ denote the corresponding proportion after the
two progressions are truncated to their first $N$ terms.  There are two
iterated limits:
\[
 \begin{array}{c}
 \text{remove the truncation }(N\to\infty)
 \text{ and then let the partition size grow }(n\to\infty),\\[2mm]
 \text{or first use the finite-dimensional asymptotic }(n\to\infty)
 \text{ and then let }N\to\infty.
 \end{array}
\]
The first one gives the bias for unrestricted partitions, while the second
one can be computed from the finite-dimensional formula.  Note that the
stabilization in $N$ depends on $n$, so the equality of these two limits does
not follow directly.  Li and Zhao conjectured that they are equal
\cite[Conjecture 1]{LiZhao2026}.

Li and Zhao state their conjecture with the normalization $(r,s,m)=1$.
We retain this normalization because the finite-truncation input used below
is their finite-set theorem, whose marked parts must have greatest common
divisor one.  Indeed, for every $N\geq2$,
\[
 \gcd(\cR_N\cup\cS_N)=\gcd(r,s,m)=1,
\]
so the cited theorem applies to all sufficiently large $N$; the finitely
many smaller values of $N$ do not affect the limit $N\to\infty$.  The
unrestricted Boltzmann argument itself only depends on $r,s,m$ through
$a=r/m$ and $b=s/m$, but the commutation statement is made under the
normalization above.

We prove the conjecture and identify the limiting distribution of the
difference of the two numbers of parts.  Fix positive integers $r,s,m$ with
\begin{equation}\label{eq:distinct-residues}
 r\not\equiv s\pmod m,
\end{equation}
and
\begin{equation}\label{eq:coprime-normalization}
 \gcd(r,s,m)=1.
\end{equation}
Put
\[
 \cR=\{r+jm:j\geq0\},\qquad
 \cS=\{s+jm:j\geq0\}.
\]
For a partition $\lambda$, let $N_A(\lambda)$ be the number of parts of
$\lambda$ that belong to $A$, counted with multiplicity, and define
\[
 D(\lambda)=N_{\cR}(\lambda)-N_{\cS}(\lambda).
\]
Write $\mathcal P(n)$ for the set of partitions of $n$, $p(n)=|\mathcal
P(n)|$, and
\[
 p_{\cR>\cS}(n)=|\{\lambda\in\mathcal P(n):D(\lambda)>0\}|.
\]
For $a,b>0$, let
\[
 I_x(a,b)=\frac{1}{B(a,b)}\int_0^x u^{a-1}(1-u)^{b-1}\,du
\]
be the regularized incomplete beta function, where
\[
 B(a,b)=\int_0^1u^{a-1}(1-u)^{b-1}\,du
        =\frac{\Gamma(a)\Gamma(b)}{\Gamma(a+b)}
\]
is Euler's beta function and beta--gamma identity; see
\cite[Section 5.12]{Olver2010}.  Also,
$\Gamma(c)=\int_0^\infty x^{c-1}\e^{-x}\,dx$ is Euler's gamma function.
Equivalently, if $U$ has the $\operatorname{Beta}(a,b)$ distribution with
probability density
\[
 f_U(u)=\frac{u^{a-1}(1-u)^{b-1}}{B(a,b)},\qquad 0<u<1,
\]
then $I_x(a,b)=\PP(U\leq x)$.

For $c>0$, we write $G_c\sim\operatorname{Gamma}(c,1)$ if $G_c$ is a
positive random variable with density
\begin{equation}\label{eq:gamma-random-variable}
 g_c(x)=\frac{x^{c-1}\e^{-x}}{\Gamma(c)},\qquad x>0.
\end{equation}
\begin{theorem}
\label{thm:main}
Let \eqref{eq:distinct-residues} and
\eqref{eq:coprime-normalization} hold, and set $a=r/m$ and $b=s/m$.
For $t>0$, define
\begin{equation}\label{eq:mu-def}
 \mu(t)=\sum_{k\geq1}\frac{k}{\e^{tk}-1},
\end{equation}
and let $t_n$ be the unique positive solution of $\mu(t_n)=n$.  If
$\Lambda_n$ is uniformly distributed on $\mathcal P(n)$---that is, each of
the $p(n)$ partitions is selected with probability $1/p(n)$---then
\begin{equation}\label{eq:distribution-main}
 t_nD(\Lambda_n)\ \xrightarrow{\ d\ }\
 W:=\frac{\log G_b-\log G_a}{m},
\end{equation}
where $G_a$ and $G_b$ are independent gamma random variables with respective
shape parameters $a$ and $b$ and common rate $1$, in the notation above.
Consequently,
\begin{equation}\label{eq:bias-main}
 \lim_{n\to\infty}\frac{p_{\cR>\cS}(n)}{p(n)}
 =I_{1/2}(a,b).
\end{equation}
Moreover,
\begin{equation}\label{eq:tn-asymp-intro}
 t_n\sim\frac{\pi}{\sqrt{6n}}.
\end{equation}
\end{theorem}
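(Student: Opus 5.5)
We will follow the Boltzmann conditioning device. Under the Boltzmann measure $\PP_t$ with parameter $t>0$, the multiplicities $m_k$ are independent geometric variables with $\PP_t(m_k=j)=(1-\e^{-tk})\e^{-tkj}$, and conditioning on $|\lambda|=n$ recovers the uniform law on $\mathcal P(n)$. The mean of $|\lambda|$ under $\PP_t$ is exactly $\mu(t)$, which is why we take $t=t_n$. The asymptotic \eqref{eq:tn-asymp-intro} is routine: Euler--Maclaurin gives $\mu(t)=\pi^2/(6t^2)+O(1/t)$, and hence $t_n\sim\pi/\sqrt{6n}$. Uniqueness of $t_n$ holds because $\mu$ is strictly decreasing from $\infty$ to $0$.

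The first step is the unconditioned limit law. For real $\theta$, independence gives
\[
\EE_t\bigl[\e^{i\theta tD}\bigr]=\prod_{j\ge0}\frac{1-q^{r+jm}}{1-q^{r+jm}\e^{i\theta t}}\cdot\frac{1-q^{s+jm}}{1-q^{s+jm}\e^{-i\theta t}},\qquad q=\e^{-t}.
\]
Writing $Q=q^m=\e^{-mt}$, each factor is a ratio of $q$-Pochhammer symbols $(Q^{a};Q)_\infty/(Q^{a+i\theta/m};Q)_\infty$, after using $\e^{i\theta t}=Q^{-i\theta/m}$. By the $q$-gamma function $\Gamma_Q(x)=(Q;Q)_\infty(Q^x;Q)_\infty^{-1}(1-Q)^{1-x}$, this equals $\Gamma_Q(a+i\theta/m)/\Gamma_Q(a)$ times $(1-Q)^{i\theta/m}$. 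The $b$-factor has $-i\theta/m$, so the powers $(1-Q)^{\pm i\theta/m}$ cancel. As $t\to0$ we have $Q\to1$, and $\Gamma_Q\to\Gamma$ locally uniformly off the poles. The limit is therefore
\[
\frac{\Gamma(a+i\theta/m)\Gamma(b-i\theta/m)}{\Gamma(a)\Gamma(b)},
\]
which is precisely $\EE[\e^{i\theta W}]$, since $\EE[G_c^{iu}]=\Gamma(c+iu)/\Gamma(c)$. Hence $t D\to W$ in law under $\PP_t$.

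The second step, which is the main obstacle, is transferring this to the conditioned measure. We write
\[
\EE\bigl[\e^{i\theta t_nD(\Lambda_n)}\bigr]=\frac{\EE_{t_n}\bigl[\e^{i\theta t_nD}\1\{|\lambda|=n\}\bigr]}{\PP_{t_n}(|\lambda|=n)}
\]
and expand both numerator and denominator by Fourier inversion in the variable $\phi\in[-\pi,\pi]$ conjugate to $|\lambda|$. The denominator follows from the standard local limit theorem: $\mathrm{Var}(|\lambda|)\asymp t^{-3}$, so $\PP_{t_n}(|\lambda|=n)\sim(2\pi\sigma_n^2)^{-1/2}$ with $\sigma_n^2\asymp n^{3/2}$. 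For the numerator, the joint characteristic function factorizes over parts. The key point is that the parts in $\cR\cup\cS$ contributing to $D$ at scale $t_n$ are the small parts $k=O(1/t)$ with $m_k$ of size up to $O(1/t)$; these contribute to $|\lambda|$ only $O(t^{-2})$ in total, and their dependence on $\theta$ is a perturbation that changes the Gaussian part only slightly. Concretely, we split $\phi$ into a central range $|\phi|\le t^{3/2}\delta_n$ and its complement. On the central range, we show uniformly that the joint characteristic function equals the product of $\EE_t[\e^{i\theta tD}]$, the Gaussian factor $\e^{-\sigma^2\phi^2/2}$, and $1+o(1)$. This uses a cumulant expansion in $\phi$: the $\theta$-tilt shifts the mean of $|\lambda|$ by $O(t^{-2})=o(\sigma)$ and the variance by a relative $o(1)$. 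On the complement, we reuse the usual bound $\prod_k|1-q^k|/|1-q^k\e^{i\phi k}|\le\exp(-c\,\delta_n^2)$ or better, applied to the parts outside $\cR\cup\cS$ (a positive density of all $k$, since $m\ge2$), with the $\theta$-dependent factors bounded in modulus by a constant. This minor-arc estimate, together with the uniformity in $\theta$ on compacts, is where the real work lies.

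Dividing numerator by denominator then gives $\EE[\e^{i\theta t_nD(\Lambda_n)}]\to\EE[\e^{i\theta W}]$, which proves \eqref{eq:distribution-main}. Since $W$ has a continuous law, $\PP(D(\Lambda_n)>0)\to\PP(W>0)=\PP(G_b>G_a)=\PP\bigl(G_a/(G_a+G_b)<1/2\bigr)$. The ratio $G_a/(G_a+G_b)$ is $\operatorname{Beta}(a,b)$, and this yields \eqref{eq:bias-main}.
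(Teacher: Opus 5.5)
Your overall architecture matches the paper's: Boltzmann model, $q$-gamma limit, Fourier inversion split into a central arc and a complementary arc, then L\'evy and portmanteau. The complementary-arc step as you propose it, however, fails.

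\textbf{The gap.} You bound the factors with $k\in\cR\cup\cS$ by $1$ and seek decay from the unmarked parts alone. The unmarked parts need not have positive density, nor avoid a proper sublattice:
\begin{itemize}
\item for $(m,r,s)=(2,1,2)$ every positive integer is marked;
\item for $(3,1,2)$ the unmarked parts are exactly the multiples of $3$;
\item for $(4,1,3)$ they are exactly the even integers.
\end{itemize}
Take $(3,1,2)$. Every unmarked factor $(1-\rho_k)/|1-\rho_k\e^{ik\varphi}|$ equals $1$ at $\varphi=\pm 2\pi/3$. Writing $\varphi=2\pi/3+\psi$, the unmarked product is the modulus of the characteristic function, at $\psi$, of the size of a Boltzmann partition into multiples of $3$. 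This is a peak of height $1$ and width $\asymp t^{3/2}$, so its integral is of the same order $t^{3/2}$ as the main term. No bound like $\exp(-c\delta_n^2)$ holds there, and the complementary arc is not shown to be negligible. The same happens at $\varphi=\pi$ for $(4,1,3)$.

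The decay at these points comes precisely from the marked parts, whose phases are $k\varphi\pm\theta t$. The paper keeps all $k\le K=\lfloor 1/t\rfloor$, marked or not:
\begin{itemize}
\item Using $|1-\rho_k\e^{ix}|^2=(1-\rho_k)^2+2\rho_k(1-\cos x)$ with $\rho_k\ge\e^{-1}$, one gets $|J_t(\theta,\varphi)|\le\exp[-c\sum_{k\le K}\{1-\cos(k\varphi+\theta t\varepsilon_k)\}]$.
\item The tilt is a harmless perturbation: $|\cos(k\varphi+\theta t\varepsilon_k)-\cos k\varphi|\le|\theta| t$, which costs only $|\theta|Kt=O(1)$ in the exponent.
\item The elementary bound $\sum_{k\le K}(1-\cos k\varphi)\ge c\min\{K^3\varphi^2,K\}$ then gives $|J_t|\le C\exp[-c\min(\varphi^2/t^3,1/t)]$, uniformly for $\theta$ in compacts and for every admissible $(m,r,s)$.
\end{itemize}

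\textbf{Central arc.} Your justification is wrong as written. Since $\sigma\asymp t^{-3/2}$, a mean shift of order $t^{-2}$ is not $o(\sigma)$. On the scale $\varphi\asymp t^{3/2}$ it would produce a phase of size $t^{-1/2}\to\infty$ and destroy the factorization. The actual shift is much smaller. With $h_k(z)=-\log(1-\rho_k\e^{iz})$, it is at most
\[
\sum_k k\,|h_k'(\theta t\varepsilon_k)-h_k'(0)|\le|\theta|\,t\sum_k k\rho_k(1-\rho_k)^{-2}=O\bigl(t^{-1}\log(1/t)\bigr).
\]
The variance changes by $O(t^{-2}\log(1/t))=o(t^{-3})$, and the cubic remainder is $O(|\varphi|^3t^{-4})$. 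A cutoff such as $\delta_n=\log(1/t_n)$ therefore works: it is $o(t_n^{-1/6})$, and together with the $\min(\varphi^2/t^3,1/t)$ bound it makes the complementary arc $o(t_n^{3/2})$.

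\textbf{Sign slips in step one.} There are two compensating errors. Since $\e^{i\theta t}=Q^{-i\theta/m}$, the $\cR$-denominator is $(Q^{a-i\theta/m};Q)_\infty$, so the limit is $\Gamma(a-i\theta/m)\Gamma(b+i\theta/m)/(\Gamma(a)\Gamma(b))$. The expression you wrote is $\EE\e^{-i\theta W}$, not $\EE\e^{i\theta W}$. Your conclusion survives only because the two errors cancel. The direction of the bias, $I_{1/2}(a,b)$ rather than $I_{1/2}(b,a)$, hinges on this sign, so both should be fixed.
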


The limit in \eqref{eq:distribution-main} is generally non-Gaussian, that
is, it is not a normal distribution.  In fact, the change of variables
$(G_a,G_b)=(v,vq)$, whose Jacobian is $v$, shows that the ratio
$Q=G_b/G_a$ has the beta-prime density
$x^{b-1}/(B(a,b)(1+x)^{a+b})$ for $x>0$.  Since
 $Q=\e^{mW}$, the substitution $x=\e^{mw}$ gives the continuous density
\begin{equation}\label{eq:W-density}
 f_W(w)=\frac{m\,\e^{bmw}}
 {B(a,b)(1+\e^{mw})^{a+b}},\qquad w\in\mathbb R.
\end{equation}
Since $t_n$ is asymptotic to a constant times $n^{-1/2}$, the difference of
the two multiplicity counts typically has order $\sqrt n$.  Formula
\eqref{eq:W-density} shows more: its limiting shape remembers both starting
points $r$ and $s$, not merely their order.  In the special case $s=m$,
formula \eqref{eq:bias-main} becomes $2^{-r/m}$, the family of constants
predicted by Li and Zhao; when $r=1$, this includes the previously known
residue-class bias.

The proof has two halves.  The first evaluates Li and Zhao's finite-$N$
integral by rescaling and dominated convergence; it produces
$I_{1/2}(a,b)$.  The second treats unrestricted partitions with the
conditioning device of Fristedt \cite{Fristedt1993}: multiplicities become
independent geometric variables, conditioning on the total size gives the
uniform measure on $\mathcal P(n)$, and a $q$-gamma limit identifies the
characteristic function in \eqref{eq:distribution-main}.  Since we
condition on a single lattice value, a conditional local limit estimate
proved by Fourier inversion is needed; a Taylor expansion controls the
central arc and a trigonometric-sum estimate makes the complementary arcs
negligible.

Section~\ref{sec:truncation} evaluates the finite-truncation limit.
Section~\ref{sec:boltzmann} introduces the Boltzmann model and proves the
unconditioned limit.  The conditioning argument is given in
Section~\ref{sec:conditioning}, and Section~\ref{sec:completion} completes
the proof of the conjecture.  Section~\ref{sec:numerics} treats three
explicit constants, and Section~\ref{sec:further} records consequences.

\section{The finite-truncation limit}
\label{sec:truncation}

We first calculate the iterated limit in which the size of the partition
tends to infinity before the truncation is removed.  This part is analytic
and independent of the probabilistic argument used later.  Besides proving
one half of the desired commutation identity, the calculation already
reveals the beta distribution that will reappear from the random-partition
model.

\subsection{The truncated model}

For $N\geq1$, set
\[
 \cR_N=\{r+jm:0\leq j<N\},\qquad
 \cS_N=\{s+jm:0\leq j<N\},
\]
and
\[
 M_N=\max(r,s)+m(N-1),\qquad
 \mathcal I_N=\{1,\ldots,M_N\}\setminus(\cR_N\cup\cS_N).
\]
Let $C_{n,N}$ be the proportion of partitions of $n$, using only parts in
$\{1,\ldots,M_N\}$, for which the number of parts in $\cR_N$ exceeds the
number in $\cS_N$.  This agrees with Li and Zhao's notation because
$\cR_N\cup\cS_N\cup\mathcal I_N=\{1,\ldots,M_N\}$.

For each fixed $n$, all parts of a partition of $n$ are at most $n$.  Choose
$N$ so large that
\[
 r+m(N-1)\geq n\qquad\text{and}\qquad s+m(N-1)\geq n.
\]
Then $M_N\geq n$, and $\cR_N$ and $\cS_N$ contain every member of $\cR$
and $\cS$, respectively, that can occur in a partition of $n$.  At that
point both the numerator and denominator defining $C_{n,N}$ have stabilized.
Therefore
\begin{equation}\label{eq:truncate-first}
 \lim_{N\to\infty}C_{n,N}=\frac{p_{\cR>\cS}(n)}{p(n)}.
\end{equation}

The subtle issue is that the threshold at which this stabilization occurs
depends on $n$.  Thus \eqref{eq:truncate-first} alone does not permit the
limits in $n$ and $N$ to be interchanged.

\begin{proposition}\label{prop:finite-limit}
Under \eqref{eq:distinct-residues} and
\eqref{eq:coprime-normalization}, with $a=r/m$ and $b=s/m$,
\begin{equation}\label{eq:finite-iterated-limit}
 \lim_{N\to\infty}\lim_{n\to\infty}C_{n,N}=I_{1/2}(a,b).
\end{equation}
\end{proposition}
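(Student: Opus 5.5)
We first evaluate the inner limit $\lim_{n\to\infty}C_{n,N}$ for fixed $N\geq2$ using Li and Zhao's finite-set theorem. That theorem applies because $\gcd(\cR_N\cup\cS_N)=1$. For finite sets of parts $R,S,I$ with $|R\cup S\cup I|=K$, their theorem expresses the limiting proportion as a volume ratio. Among the nonnegative real multiplicity vectors $(x_k)$ with $\sum_k kx_k=1$, it is the normalized $(K-1)$-dimensional volume of the slice where $\sum_{k\in R}x_k>\sum_{k\in S}x_k$. The volume is taken with respect to the cone measure: the uniform law on a simplex with vertices $e_k/k$. A uniform point of that simplex can be written as $x_k=E_k/(k\sum_j E_j)$ with $E_k$ i.i.d.\ standard exponentials; this is the Dirichlet$(1,\dots,1)$ representation.

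The normalization $\sum_j E_j$ is positive and does not affect the sign, so the limit becomes
\[
 \lim_{n\to\infty}C_{n,N}=\PP\Bigl(\sum_{j=0}^{N-1}\frac{E_j}{r+jm}>\sum_{j=0}^{N-1}\frac{E'_j}{s+jm}\Bigr),
\]
with all exponentials independent. The parts in $\mathcal I_N$ drop out entirely, which is why the answer is insensitive to them. If the cited theorem is stated as an integral rather than in this form, the first step is to rewrite it this way: rescale and integrate out the radial variable.

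Next we let $N\to\infty$. Write $X_N=\sum_{j<N}E_j/(r+jm)$. This sum diverges like $\frac1m\log N$, as does the analogous sum $Y_N$, so we must look at the difference. The key identity is the limit of the Laplace transform of the difference, or better the characteristic function:
\[
 \EE\,\e^{i\theta(X_N-Y_N)}=\prod_{j<N}\frac{1+\frac{i\theta}{s+jm}}{1+\frac{i\theta}{r+jm}}.
\]
By the Weierstrass product for $\Gamma$, this converges to
\[
 \frac{\Gamma(a)\Gamma(b-i\theta/m)}{\Gamma(b)\Gamma(a-i\theta/m)}\cdot\frac{\Gamma(b)\Gamma(a)}{\Gamma(a)\Gamma(b)}\,(\text{correction}),
\]
where the correction factor $N^{\,\pm i\theta/m}$ cancels between numerator and denominator because both progressions share the common difference $m$. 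The limit is therefore
\[
 \frac{\Gamma(a+i\theta/m)\,\Gamma(b-i\theta/m)}{\Gamma(a)\,\Gamma(b)}\quad\text{(up to sign conventions)},
\]
which is exactly $\EE\,\e^{i\theta(\log G_b-\log G_a)/m}$, since $\EE\, G_c^{iu}=\Gamma(c+iu)/\Gamma(c)$. Lévy's continuity theorem then gives $X_N-Y_N\xrightarrow{d}W$.

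Since $W$ has the continuous density \eqref{eq:W-density}, the point $0$ is a continuity point. Hence $\PP(X_N>Y_N)\to\PP(W<0)$ with the appropriate orientation, which equals $\PP(G_a>G_b)$ or its complement. From the beta representation $U=G_a/(G_a+G_b)\sim\operatorname{Beta}(a,b)$, we get
\[
 \PP(G_b>G_a)=\PP(U<1/2)=I_{1/2}(a,b).
\]
Checking the orientation is a short sign bookkeeping. Small parts carry larger weight $1/k$ in $X_N$, and indeed $r<s$ should favour $\cR$, consistent with $I_{1/2}(a,b)>1/2$ when $a<b$.

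The main obstacle is the first step: matching the exact form of Li and Zhao's finite-set limiting formula to the exponential representation. In particular, we must verify that their lattice-point-versus-volume ratio is indeed the Dirichlet/cone-measure probability with weights $1/k$. Once that is in place, the rest is the convergence of the finite gamma-type products. That convergence can alternatively be done directly via dominated convergence on the integral formula, as the paper's introduction suggests.
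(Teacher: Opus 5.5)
Your route differs from the paper's and works in outline, but the central computation is wrong as written. For $E\sim\operatorname{Exp}(1)$ and $c>0$ one has $\EE\,\e^{i\theta E/c}=(1-i\theta/c)^{-1}$, so
\[
 \EE\,\e^{i\theta(X_N-Y_N)}
 =\prod_{j<N}\Bigl(1-\frac{i\theta}{r+jm}\Bigr)^{-1}\Bigl(1+\frac{i\theta}{s+jm}\Bigr)^{-1}
 =\frac{\Gamma(a+N)\,\Gamma(a-i\theta/m)}{\Gamma(a)\,\Gamma(a-i\theta/m+N)}
 \cdot\frac{\Gamma(b+N)\,\Gamma(b+i\theta/m)}{\Gamma(b)\,\Gamma(b+i\theta/m+N)}.
\]
Your product puts the $\cS$ factors in the numerator. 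Taken literally, it converges to $\Gamma(a+i\theta/m)\Gamma(b)/(\Gamma(a)\Gamma(b+i\theta/m))$. This is not $\phi(\pm\theta)$, and for $a>b$ it grows like $|\theta|^{a-b}$, so it is not a characteristic function at all. Your intermediate gamma display does not follow from either product. Your final expression $\Gamma(a+i\theta/m)\Gamma(b-i\theta/m)/(\Gamma(a)\Gamma(b))$ equals $\phi(-\theta)$, which is the characteristic function of $-W$, not of $W$ as you claim. With the correct product, the fixed-shift ratios $\Gamma(N+c)/\Gamma(N+c')\sim N^{c-c'}$ produce $N^{i\theta/m}$ and $N^{-i\theta/m}$, which cancel and leave exactly $\phi(\theta)$ from \eqref{eq:phi-limit}. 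Hence $X_N-Y_N\to W$ with the theorem's own orientation, and $\PP(X_N>Y_N)\to\PP(W>0)=\PP(G_a<G_b)=I_{1/2}(a,b)$, with no sign bookkeeping needed. Your stochastic-domination check would settle the orientation once $X_N-Y_N\to\pm W$ is known, since $a\neq b$, but it is then redundant.

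The paper instead starts from Li and Zhao's explicit formula \eqref{eq:Li-Zhao-formula}. It rescales $x=u/N$, $y=v/N$, applies dominated convergence with the bound $\e^{-u/2}$ together with the fixed-shift gamma ratio, and identifies the limiting double integral as $\PP(G_a<G_b)$. The obstacle you flag closes with a Mellin-transform check. Since $\EE\,\e^{-s\sum_{j<N}E_j/(a+j)}=\prod_{j<N}(a+j)/(a+j+s)=B(a+s,N)/B(a,N)$, the variable $mX_N$ has the law of $-\log V_a$ with $V_a\sim\operatorname{Beta}(a,N)$. Likewise $mY_N$ has the law of $-\log V_b$ with $V_b\sim\operatorname{Beta}(b,N)$, independent of $V_a$. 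Thus $\PP(X_N>Y_N)=\PP(V_a<V_b)$, which written out is precisely the right side of \eqref{eq:Li-Zhao-formula}.

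So the two proofs take the same limit in different coordinates. The paper's substitution encodes $NV_a\to G_a$ and $NV_b\to G_b$ at the level of densities. You prove $\log(NV_b)-\log(NV_a)\to mW$ via characteristic functions. Your version shows that the finite-$N$ limit object converges in law to the same $W$ as in Theorem~\ref{thm:main}, which explains structurally why the two iterated limits agree. The paper's version takes the cited formula as given and needs only dominated convergence and Stirling's ratio, with no identification of the volume ratio as a comparison of exponential sums.
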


\subsection{Asymptotics of the integral}

\begin{proof}
Formula (18) of Li and Zhao \cite{LiZhao2026}, rewritten in gamma notation,
gives the following exact expression for the fixed-$N$ limit:
\begin{equation}\label{eq:Li-Zhao-formula}
 L_N:=\lim_{n\to\infty}C_{n,N}
 =\frac{\Gamma(N+a)\Gamma(N+b)}
 {\Gamma(a)\Gamma(b)\Gamma(N)^2}\,J_N,
\end{equation}
where
\begin{equation}\label{eq:JN}
 J_N=\int_0^1 x^{b-1}(1-x)^{N-1}
       \left(\int_0^x y^{a-1}(1-y)^{N-1}\,dy\right)dx.
\end{equation}
The cited formula is stated in terms of the finite allowed-parts model
$\cR_N\cup\cS_N\cup\mathcal I_N$.  In the present notation this is exactly
the model using all parts in $\{1,\ldots,M_N\}$.  For $N\geq2$, the
normalization \eqref{eq:coprime-normalization} gives
$\gcd(\cR_N\cup\cS_N)=1$, which is the arithmetic hypothesis needed in the
cited finite-set theorem.  The parameters of the marked progressions enter
the resulting homogeneous volume ratio through $a=r/m$ and $b=s/m$.
The factors $(1-x)^{N-1}$ and $(1-y)^{N-1}$ force the main contribution to
come from $x,y$ of order $N^{-1}$.  We make the substitutions
$x=u/N$ and $y=v/N$.  Since
\[
 dx\,dy=N^{-2}\,du\,dv,\qquad
 x^{b-1}y^{a-1}=N^{-(a+b-2)}u^{b-1}v^{a-1}.
\]
the Jacobian and the powers in the integrand contribute $N^{-(a+b)}$.
Therefore
\begin{align}
 N^{a+b}J_N
 ={}&\int_0^N u^{b-1}\left(1-\frac{u}{N}\right)^{N-1}
 \left(\int_0^u v^{a-1}
       \left(1-\frac{v}{N}\right)^{N-1}\,dv\right)du.
 \label{eq:JN-scaled}
\end{align}
For each fixed pair $0\leq v\leq u$, we have
\[
 \left(1-\frac{u}{N}\right)^{N-1}\longrightarrow\e^{-u},
 \qquad
 \left(1-\frac{v}{N}\right)^{N-1}\longrightarrow\e^{-v}.
\]
To pass this pointwise limit through both integrals, we need a bound that is
independent of $N$.  Since $\log(1-z)\leq-z$ for $0\leq z<1$, for
$N\geq2$ and $0\leq u<N$,
\[
 \left(1-\frac{u}{N}\right)^{N-1}
 \leq\exp\left(-\frac{N-1}{N}u\right)
 \leq \e^{-u/2},
\]
and the endpoint $u=N$ follows by continuity.  The same bound holds with
$v$ in place of $u$.  After extending the integrand in
\eqref{eq:JN-scaled} by zero to $(N,\infty)$, it is bounded by
\[
 u^{b-1}\e^{-u/2}\int_0^u v^{a-1}\e^{-v/2}\,dv.
\]
This is integrable on $(0,\infty)$.  Near zero, the inner integral is
$O(u^a)$, so the whole expression is $O(u^{a+b-1})$; at infinity, the inner
integral is bounded by $2^a\Gamma(a)$ and the factor $\e^{-u/2}$ gives
exponential decay.  The dominated convergence theorem therefore applies
and gives
\begin{equation}\label{eq:JN-limit}
 \lim_{N\to\infty}N^{a+b}J_N
 =\int_0^\infty u^{b-1}\e^{-u}
   \left(\int_0^u v^{a-1}\e^{-v}\,dv\right)du.
\end{equation}
For fixed $c>0$, the only form of Stirling's formula needed here is the
gamma-ratio estimate
\[
 \frac{\Gamma(N+c)}{\Gamma(N)N^c}\longrightarrow1.
\]
This is the fixed-shift consequence of Stirling's formula; see
\cite[Section 5.11]{Olver2010}.  Equivalently, replacing $\Gamma(N+c)$ by
$\Gamma(N)N^c$ incurs a relative error tending to zero.
Thus the gamma prefactor in \eqref{eq:Li-Zhao-formula} is asymptotic to
$N^{a+b}/(\Gamma(a)\Gamma(b))$.  Combining this with
\eqref{eq:JN-limit} yields
\begin{equation}\label{eq:gamma-probability-integral}
 \lim_{N\to\infty}L_N
 =\frac{1}{\Gamma(a)\Gamma(b)}
 \int_0^\infty u^{b-1}\e^{-u}
 \left(\int_0^u v^{a-1}\e^{-v}\,dv\right)du.
\end{equation}

\medskip
\noindent\emph{Identification of the limiting integral.}
Let $G_a$ and $G_b$ be independent random variables with the gamma
distributions in \eqref{eq:gamma-random-variable}.  Their densities are
\[
 \frac{v^{a-1}\e^{-v}}{\Gamma(a)}
 \quad\text{and}\quad
 \frac{u^{b-1}\e^{-u}}{\Gamma(b)},\qquad u,v>0.
\]
Integrating their joint density over the region $0<v<u<\infty$ shows
directly that the right-hand side of
\eqref{eq:gamma-probability-integral} is $\PP(G_a<G_b)$.

Indeed, the inner integral is the density of $G_a$ integrated over
$0<v<u$, while the outer factor is the density of $G_b$ at $u$.  Thus the
double integral is
$\PP(G_a<G_b)=\int_0^\infty f_{G_b}(u)\PP(G_a<u)\,du$.

To identify this probability, put
\[
 X=\frac{G_a}{G_a+G_b},\qquad Y=G_a+G_b.
\]
The inverse transformation is $(G_a,G_b)=(YX,Y(1-X))$ and has Jacobian
$Y$.  Hence the joint density of $(X,Y)$ factors as
\[
 \frac{x^{a-1}(1-x)^{b-1}}{B(a,b)}
 \cdot
 \frac{y^{a+b-1}\e^{-y}}{\Gamma(a+b)},
 \qquad 0<x<1, y>0.
\]
Thus $X$ and $Y$ are independent, with
$X\sim\operatorname{Beta}(a,b)$, and in particular
\[
 \frac{G_a}{G_a+G_b}\sim\operatorname{Beta}(a,b),
\]
and hence
\[
 \PP(G_a<G_b)
 =\PP\left(\frac{G_a}{G_a+G_b}<\frac12\right)
 =I_{1/2}(a,b).
\]
This proves \eqref{eq:finite-iterated-limit}.
\end{proof}

\section{The Boltzmann model and the unconditioned limit}
\label{sec:boltzmann}

\subsection{Independent multiplicities and exact conditioning}

We begin by embedding the uniform measure on partitions of a fixed integer
into a one-parameter family of product measures.  This is often called the
Boltzmann measure, or the conditioning device for partitions.  A probability
measure on $\mathcal P$ assigns nonnegative weights, of total mass $1$, to
the possible partitions.  In the Boltzmann model the weight decreases
exponentially with the size; the auxiliary parameter $t$ controls how
strongly large partitions are suppressed.  Fix $t>0$, write
$\rho_k=\e^{-tk}$, and, independently for each $k\geq1$, let $X_k$ have the
geometric distribution
\begin{equation}\label{eq:geometric}
 \PP_t(X_k=j)=(1-\rho_k)\rho_k^j=(1-\e^{-tk})\e^{-tkj},
 \qquad j=0,1,2,\ldots.
\end{equation}
Thus $X_k$ counts how many copies of the part $k$ occur: zero copies have
probability $1-\rho_k$, and every additional copy contributes another
factor $\rho_k$.  Independence means, more precisely, that for any finite
set of indices the probability of prescribed values is the product of the
individual probabilities.  We regard $X_k$ as the multiplicity of the part
$k$.  Thus every realization
$(X_1,X_2,\ldots)$ determines the formal partition having $X_k$ copies of
$k$.  Its total size and the difference between the two multiplicity counts
are respectively
\begin{equation}\label{eq:T-and-D}
 T=\sum_{k\geq1}kX_k,
 \qquad
 D=\sum_{j\geq0}X_{r+jm}-\sum_{j\geq0}X_{s+jm}.
\end{equation}
These random series are well defined.  Indeed,
here $\EE_tY$ denotes the expectation, or probability-weighted mean, of a
random variable $Y$ under $\PP_t$, and
\[
 \EE_tT=\sum_{k\geq1}\frac{k}{\e^{tk}-1}<\infty
\]
for every fixed $t>0$, since the summand is $O(k\e^{-tk})$ as
$k\to\infty$.  A nonnegative random variable with finite expectation is
finite almost surely, where ``almost surely'' means with probability one.
Hence $T<\infty$ almost surely; because
$\sum_{k\geq1}X_k\leq T$, only finitely many parts occur and both sums
defining $D$ are also almost surely finite.

Let $\mathcal P$ denote the set of all integer partitions, including the
empty partition, and write $|\lambda|$ for the size of $\lambda$.  The
normalizing constant of the product measure is Euler's partition product
\[
 Z(t)=\sum_{\lambda\in\mathcal P}\e^{-t|\lambda|}
     =\prod_{k\geq1}\left(\sum_{j\geq0}\e^{-tkj}\right)
     =\prod_{k\geq1}(1-\e^{-tk})^{-1}.
\]
The product is finite for $t>0$: for instance,
$-\log(1-\e^{-tk})=O(\e^{-tk})$ as $k\to\infty$.  Independence in
\eqref{eq:geometric} now shows that $(X_k)$ is precisely the multiplicity
sequence of a random partition with Boltzmann law
\begin{equation}\label{eq:Boltzmann-measure}
 \PP_t(\lambda)=\frac{\e^{-t|\lambda|}}{Z(t)},
 \qquad \lambda\in\mathcal P.
\end{equation}
Here ``law'' is synonymous with ``probability distribution.''  Formula
\eqref{eq:Boltzmann-measure} makes the preceding description precise:
partitions of the same size have equal weight, while increasing the size by
one multiplies the unnormalized weight by $\e^{-t}$.

The parameter $t$ biases the measure toward smaller or larger total size,
but this bias disappears completely once the size is fixed.  To see this
explicitly, let $\lambda$ be a partition of $n$ with multiplicity sequence
$(x_k)_{k\geq1}$, so that $\sum_{k\geq1}kx_k=n$.  Then
\[
 \begin{aligned}
 \PP_t(X_k=x_k\text{ for all }k)
 &=\prod_{k\geq1}(1-\e^{-tk})\e^{-tkx_k}\\
 &=Z(t)^{-1}\exp\left(-t\sum_{k\geq1}kx_k\right)
 =Z(t)^{-1}\e^{-tn}.
 \end{aligned}
\]
The last expression depends on $\lambda$ only through $n$.  Since there are
$p(n)$ partitions of $n$, summing this identity over $\mathcal P(n)$ gives
\begin{equation}\label{eq:size-probability}
 \PP_t(T=n)=p(n)Z(t)^{-1}\e^{-tn}.
\end{equation}
It follows, for each $\lambda\in\mathcal P(n)$, that
\[
 \PP_t\bigl((X_k)_{k\geq1}\text{ represents }\lambda\mid T=n\bigr)
 =\frac{Z(t)^{-1}\e^{-tn}}
        {p(n)Z(t)^{-1}\e^{-tn}}
 =\frac1{p(n)}.
\]
For a random variable or random vector $Y$, write $\mathcal L_t(Y)$ for its
probability distribution under $\PP_t$, and write $\mathcal L_t(Y\mid A)$ for
its conditional distribution under $\PP_t$ given an event $A$ of positive
probability.  In other words, $\mathcal L_t(Y\mid A)$ is the probability
distribution of $Y$ after restricting to $A$ and renormalizing.  Let
$\mathcal U_n$ denote the uniform probability measure on $\mathcal P(n)$,
so that $\mathcal U_n(\{\lambda\})=1/p(n)$ for every $\lambda\in\mathcal P(n)$.
Under the natural identification of a partition with its multiplicity
sequence, the essential conditioning relation is the exact identity
\begin{equation}\label{eq:conditioning-identity}
 \mathcal L_t((X_k)_{k\geq1}\mid T=n)
 =\mathcal U_n.
\end{equation}
For example, conditioning on $T=n$ means dividing
$\PP_t(\,\cdot\,,T=n)$ by $\PP_t(T=n)$; it discards all outcomes of other
sizes and rescales the remaining probabilities to have total mass $1$.
In particular, the conditional law in \eqref{eq:conditioning-identity} does
not depend on $t$.  We shall exploit this freedom by choosing $t=t_n$ so
that $\EE_{t_n}T=n$.  With this saddle-point choice, the event on which we
condition is located at the center, rather than in a tail, of the
unconditioned distribution of $T$.

For example, consider the partition $7=4+2+1$.  Its multiplicities are
$x_1=x_2=x_4=1$ and all other $x_k=0$, and its probability is
$\prod_k(1-e^{-tk})e^{-tkx_k}=Z(t)^{-1}e^{-7t}$.  Every partition of $7$
has the same probability.  Hence, after conditioning on $T=7$, each one
has probability $1/p(7)$.

\subsection{The saddle-point scale}

We next record the scale of that distribution.  For a geometric random
variable with parameter $\rho_k$ as in \eqref{eq:geometric},
\begin{equation}\label{eq:geometric-moments}
 \EE_tX_k=\frac{\rho_k}{1-\rho_k}=\frac1{\e^{tk}-1},
 \qquad
 \operatorname{Var}_t(X_k)=\frac{\rho_k}{(1-\rho_k)^2}.
\end{equation}
Here
$\operatorname{Var}_t(Y)=\EE_t[(Y-\EE_tY)^2]$ is the variance, which
measures squared fluctuation about the mean; its square root is the standard
deviation.

\begin{lemma}\label{lem:moments}
As $t\downarrow0$,
\begin{align}
 \mu(t):=\EE_tT
 &=\sum_{k\geq1}\frac{k}{\e^{tk}-1}
 =\frac{\pi^2}{6t^2}+O(t^{-1}),
 \label{eq:mean-asymp}\\
 \sigma^2(t):=\operatorname{Var}_t(T)
 &=\sum_{k\geq1}\frac{k^2\e^{-tk}}{(1-\e^{-tk})^2}
 =\frac{\pi^2}{3t^3}+O(t^{-2}).
 \label{eq:variance-asymp}
\end{align}
In particular, the equation $\mu(t_n)=n$ has a unique solution and
$t_n\sim\pi/\sqrt{6n}$.
\end{lemma}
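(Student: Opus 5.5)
The plan is to treat both sums as Riemann-type sums of smooth functions of $x=tk$ and compare them with integrals using the Euler--Maclaurin formula (or equivalently a Mellin-transform argument). First, the identities for $\mu(t)$ and $\sigma^2(t)$ follow from \eqref{eq:geometric-moments}: independence gives $\EE_tT=\sum_k k\,\EE_tX_k$ and $\operatorname{Var}_t(T)=\sum_k k^2\operatorname{Var}_tX_k$. To interchange expectation and infinite sum, I would use monotone convergence for the mean. For the variance, I would note that the partial sums $T_K=\sum_{k\le K}kX_k$ converge in $L^2$, since $\sum_k k^2\EE_tX_k^2<\infty$ for fixed $t>0$.

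For the asymptotics, set $f(x)=x/(\e^x-1)$ and $g(x)=x^2\e^{-x}/(1-\e^{-x})^2$. Then
\[
 \mu(t)=t^{-2}\sum_{k\ge1}t\,f(tk),\qquad
 \sigma^2(t)=t^{-3}\sum_{k\ge1}t\,g(tk).
\]
Both $f$ and $g$ are smooth on $[0,\infty)$; $f$ extends with $f(0)=1$ and $g$ with $g(0)=1$. Their derivatives decay exponentially and are integrable. A first-order comparison of sum and integral gives
\[
 \Bigl|t\sum_{k\ge1}h(tk)-\int_0^\infty h(x)\,dx\Bigr|\le t\bigl(|h(0)|+\|h'\|_{L^1}\bigr)=O(t)
\]
for $h\in\{f,g\}$. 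This follows by writing $\int_{t(k-1)}^{tk}h-t\,h(tk)$ and bounding it by $t\int_{t(k-1)}^{tk}|h'|$. It remains to compute the integrals. For $f$, expanding $1/(\e^x-1)=\sum_{j\ge1}\e^{-jx}$ gives $\int_0^\infty x/(\e^x-1)\,dx=\Gamma(2)\zeta(2)=\pi^2/6$. For $g$, note that $g(x)=-x^2\frac{d}{dx}\bigl(1/(\e^x-1)\bigr)$. Integration by parts then gives $\int_0^\infty g=2\int_0^\infty x/(\e^x-1)\,dx=\pi^2/3$; the boundary terms vanish because $x^2/(\e^x-1)\to0$ at both ends. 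Multiplying the $O(t)$ errors by $t^{-2}$ and $t^{-3}$ yields \eqref{eq:mean-asymp} and \eqref{eq:variance-asymp}.

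For the uniqueness of $t_n$, each summand $k/(\e^{tk}-1)$ is continuous and strictly decreasing in $t$. The series converges locally uniformly on $(0,\infty)$, so $\mu$ is continuous and strictly decreasing; indeed $\mu'(t)=-\sigma^2(t)<0$. Moreover $\mu(t)\to\infty$ as $t\downarrow0$ by \eqref{eq:mean-asymp}, and $\mu(t)\to0$ as $t\to\infty$ by dominated convergence, using the bound $k/(\e^{tk}-1)\le k\e^{-tk}/(1-\e^{-1})$ for $t\ge1$. The intermediate value theorem therefore gives a unique $t_n$ for each $n\ge1$. As $n\to\infty$, monotonicity forces $t_n\to0$. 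Then $n=\pi^2/(6t_n^2)\,(1+O(t_n))$, and solving gives $t_n=\frac{\pi}{\sqrt{6n}}(1+O(t_n))$, hence $t_n\sim\pi/\sqrt{6n}$.

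The only mildly delicate step is the sum-versus-integral comparison: one must check that $f$ and $g$ have $L^1$ derivatives including the behavior near $0$. This holds because both functions are analytic at $0$ once the removable singularity is filled in. No step looks like a real obstacle.
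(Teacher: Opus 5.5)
Your proposal is correct and follows essentially the same route as the paper: the same first-order sum-versus-integral comparison with error at most $t\|h'\|_{L^1}$, applied to $x/(\e^x-1)$ and $x^2\e^{-x}/(1-\e^{-x})^2$, followed by the same monotonicity and limit argument for $t_n$. The only differences are cosmetic: you evaluate the second integral by integration by parts instead of the series $\sum_{\ell\geq1}\ell\e^{-\ell x}$, and the extra $|h(0)|$ term in your error bound is unnecessary but harmless.
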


\begin{proof}
By independence and \eqref{eq:geometric-moments}, Tonelli's theorem gives
\[
 \EE_tT=\sum_{k\geq1}k\EE_tX_k
       =\sum_{k\geq1}\frac{k}{\e^{tk}-1}.
\]
Since independent variables have additive variances, and the variance
series converges absolutely for fixed $t>0$,
\[
 \operatorname{Var}_t(T)
 =\sum_{k\geq1}k^2\operatorname{Var}_t(X_k)
 =\sum_{k\geq1}\frac{k^2\e^{-tk}}{(1-\e^{-tk})^2}.
\]

It remains to estimate the two series.  Introduce
\[
 f_1(x)=\frac{x}{\e^x-1},\qquad
 f_2(x)=\frac{x^2\e^{-x}}{(1-\e^{-x})^2},
\]
with the continuous extensions $f_1(0)=f_2(0)=1$.  Then
\begin{equation}\label{eq:moment-Riemann-sums}
 t^2\mu(t)=t\sum_{k\geq1}f_1(tk),\qquad
 t^3\sigma^2(t)=t\sum_{k\geq1}f_2(tk).
\end{equation}
Both functions are continuously differentiable on $[0,\infty)$, and
$f_j,f_j'\in L^1(0,\infty)$.  For any such function $f$, comparison on
each interval $[(k-1)t,kt]$ gives the quantitative Riemann-sum estimate
\begin{align*}
 \left|t\sum_{k\geq1}f(kt)-\int_0^\infty f(x)\,dx\right|
 &\leq\sum_{k\geq1}\int_{(k-1)t}^{kt}|f(kt)-f(x)|\,dx\\
 &\leq t\int_0^\infty|f'(x)|\,dx=O(t).
\end{align*}
Applying this to \eqref{eq:moment-Riemann-sums} yields
\begin{align*}
 t^2\mu(t)
 &=\int_0^\infty\frac{x}{\e^x-1}\,dx+O(t),\\
 t^3\sigma^2(t)
 &=\int_0^\infty
 \frac{x^2\e^{-x}}{(1-\e^{-x})^2}\,dx+O(t).
\end{align*}
Finally, the nonnegative expansions
\[
 \frac1{\e^x-1}=\sum_{\ell\geq1}\e^{-\ell x},
 \qquad
 \frac{\e^{-x}}{(1-\e^{-x})^2}
 =\sum_{\ell\geq1}\ell\e^{-\ell x}
\]
allow termwise integration and give
\[
 \int_0^\infty\frac{x}{\e^x-1}\,dx
 =\sum_{\ell\geq1}\frac1{\ell^2}=\frac{\pi^2}{6},
 \qquad
 \int_0^\infty\frac{x^2\e^{-x}}{(1-\e^{-x})^2}\,dx
 =2\sum_{\ell\geq1}\frac1{\ell^2}=\frac{\pi^2}{3}.
\]
Indeed, $\int_0^\infty x\e^{-\ell x}\,dx=1/\ell^2$ and
$\int_0^\infty x^2\e^{-\ell x}\,dx=2/\ell^3$.  Therefore the constants
$\pi^2/6$ and $\pi^2/3$ follow from
$\sum_{\ell\geq1}\ell^{-2}=\pi^2/6$.
This proves \eqref{eq:mean-asymp} and \eqref{eq:variance-asymp}.

For completeness, the series defining $\mu$ converges locally uniformly on
$(0,\infty)$, and every summand is continuous and strictly decreasing in
$t$.  Thus $\mu$ is continuous and strictly decreasing.  The asymptotic
formula shows that $\mu(t)\to\infty$ as $t\downarrow0$, while elementary
domination by a convergent geometric series shows that $\mu(t)\to0$ as
$t\to\infty$.  Hence $\mu(t_n)=n$ has a unique solution.  Substitution in
\eqref{eq:mean-asymp} gives
$n=\pi^2/(6t_n^2)(1+O(t_n))$, and therefore
$t_n\sim\pi/\sqrt{6n}$.
\end{proof}

\subsection{The \texorpdfstring{$q$}{q}-gamma evaluation}

The distribution of $tD$ under the unconditioned measure has a particularly
simple limit.  The appearance of the $q$-gamma function is natural because
each residue class contributes a product whose exponents form an arithmetic
progression.  Recall that the characteristic function of a real random
variable $Y$ is
\[
 \chi_Y(\theta)=\EE(\e^{i\theta Y}),\qquad \theta\in\mathbb R.
\]
It is always well defined because $|\e^{i\theta Y}|=1$, and it uniquely
determines the distribution of $Y$.  We use the standard notation
\[
 (z;q)_\infty=\prod_{j\geq0}(1-zq^j),\qquad 0<q<1.
\]
For every fixed complex $z$, this product converges because
$\sum_{j\geq0}|z|q^j<\infty$ (it is zero when one of its factors is zero).
For complex exponents we use
$q^z=\exp(z\log q)$, where $\log q<0$ is the real logarithm.

\begin{proposition}\label{prop:unconditioned}
As $t\downarrow0$,
\begin{equation}\label{eq:unconditioned-convergence}
 tD\xrightarrow{\ d\ }\frac{\log G_b-\log G_a}{m}.
\end{equation}
Equivalently, for every real $\theta$,
\begin{equation}\label{eq:phi-limit}
 \Phi_t(\theta):=\EE_t\e^{i\theta tD}
 \longrightarrow
 \phi(\theta):=
 \frac{\Gamma(a-i\theta/m)\Gamma(b+i\theta/m)}
 {\Gamma(a)\Gamma(b)}.
\end{equation}
The convergence is locally uniform in $\theta\in\mathbb R$.
\end{proposition}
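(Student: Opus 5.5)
We compute $\Phi_t$ exactly from independence and then pass to the limit through the $q$-gamma function. For a geometric variable with parameter $\rho$ we have $\EE \e^{i\alpha X}=(1-\rho)/(1-\rho\e^{i\alpha})$. The sums defining $D$ are almost surely finite and the factors are independent, so
\[
 \Phi_t(\theta)=\prod_{j\geq0}\frac{1-q^{a+j}}{1-\e^{i\theta t}q^{a+j}}
 \prod_{j\geq0}\frac{1-q^{b+j}}{1-\e^{-i\theta t}q^{b+j}},
 \qquad q=\e^{-tm},
\]
since $\rho_{r+jm}=\e^{-t(r+jm)}=q^{a+j}$. Writing $\e^{i\theta t}=q^{-i\theta/m}$, this becomes
\[
 \Phi_t(\theta)=\frac{(q^a;q)_\infty}{(q^{a-i\theta/m};q)_\infty}\cdot
 \frac{(q^b;q)_\infty}{(q^{b+i\theta/m};q)_\infty}.
\]
Convergence of the infinite products is justified by the absolute convergence of $\sum_j q^j$. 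The next step uses the $q$-gamma function
\[
 \Gamma_q(z)=\frac{(q;q)_\infty}{(q^z;q)_\infty}(1-q)^{1-z}.
\]
In terms of it,
\[
 \frac{(q^a;q)_\infty}{(q^{a-i\theta/m};q)_\infty}
 =\frac{\Gamma_q(a-i\theta/m)}{\Gamma_q(a)}(1-q)^{-i\theta/m},
\]
and similarly for the $b$ factor with $(1-q)^{+i\theta/m}$. The two powers of $(1-q)$ cancel exactly. This cancellation is the reason the scaling $tD$ needs no centering. We are left with
\[
 \Phi_t(\theta)=\frac{\Gamma_q(a-i\theta/m)\Gamma_q(b+i\theta/m)}{\Gamma_q(a)\Gamma_q(b)}.
\]

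The main step is the classical limit $\Gamma_q(z)\to\Gamma(z)$ as $q\uparrow1$, locally uniformly for $z$ in a compact subset of $\{\operatorname{Re}z>0\}$. Here $z=a\mp i\theta/m$ has real part $a$ or $b$, and $\theta$ ranges over a compact set, so this suffices. I would either cite this limit (Olver et al., or Gasper--Rahman) or prove it directly. The direct route starts from
\[
 \log\Gamma_q(z)-\log\Gamma_q(1)=\sum_{j\geq0}\bigl[\log(1-q^{z+j})-\log(1-q^{1+j})\bigr]+(1-z)\log(1-q),
\]
which should be compared with the Weierstrass/Euler product of $\Gamma$. Alternatively, one can use the product representation
\[
 \Gamma_q(z)=(1-q)^{1-z}\prod_{n\geq1}\frac{1-q^n}{1-q^{n+z-1}}
\]
and apply dominated convergence to the logarithms, after pairing terms with the Euler product $\Gamma(z)=\lim_{N\to\infty} N^z N!/\prod_{k=0}^N(z+k)$. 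Uniformity in $z$ over compacts would come from bounding the differences $\log\frac{1-q^{n+z-1}}{1-q^n}$ uniformly via $|1-q^{w}|\geq c(1-q^{\operatorname{Re}w})$ for bounded imaginary part of $w$ times $t$. This estimate is where I expect the real work to be. The delicate range is $j\gtrsim 1/t$, where $q^{j}$ is not small. There I would use a Riemann-sum argument analogous to that of Lemma~\ref{lem:moments}, applied to $f(x)=\log(1-\e^{-x})$, to control the tail.

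Since $\Gamma$ has no zeros and is continuous, locally uniform convergence of each factor gives \eqref{eq:phi-limit} locally uniformly. It remains to identify $\phi$ as the characteristic function of $W$. Independence of $G_a$ and $G_b$ gives $\EE G_c^{iu}=\Gamma(c+iu)/\Gamma(c)$, and hence
\[
 \EE\e^{i\theta W}=\EE G_b^{i\theta/m}\,\EE G_a^{-i\theta/m}=\phi(\theta).
\]
Lévy's continuity theorem then yields \eqref{eq:unconditioned-convergence}.
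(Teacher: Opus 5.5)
Your proposal is correct and follows the paper's proof step for step. Both write $\Phi_t$ as a product of geometric characteristic functions, pass to $q$-Pochhammer quotients with $q=\e^{-mt}$, and convert to $\Gamma_q$ so that the powers $(1-q)^{\pm i\theta/m}$ cancel exactly. Both then invoke the locally uniform limit $\Gamma_q\to\Gamma$ on $\operatorname{Re}z>0$, identify $\phi$ through $\EE G_c^{iu}=\Gamma(c+iu)/\Gamma(c)$, and conclude with L\'evy's continuity theorem. The paper simply cites the $q$-gamma limit (Gasper--Rahman), so your optional direct proof of it is not needed.
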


\begin{proof}
We first calculate the contribution of one multiplicity.  If $X_k$ has the
distribution \eqref{eq:geometric}, then its probability generating function,
defined for an integer-valued nonnegative variable $X$ by
$G_X(z)=\EE(z^X)=\sum_{j\geq0}\PP(X=j)z^j$, is
\begin{equation}\label{eq:single-geometric-pgf}
 \EE_t z^{X_k}
 =(1-\e^{-tk})\sum_{\ell\geq0}(z\e^{-tk})^\ell
 =\frac{1-\e^{-tk}}{1-z\e^{-tk}},
 \qquad |z|<\e^{tk}.
\end{equation}
In the statistic $D$, a part from $\cR$ carries sign $+1$, whereas a part
from $\cS$ carries sign $-1$.  We therefore substitute
$z=\e^{i\theta t}$ for $k\in\cR$ and
$z=\e^{-i\theta t}$ for $k\in\cS$.  Independence and
\eqref{eq:single-geometric-pgf} give
\begin{align}
 \Phi_t(\theta)
 ={}&\prod_{j\geq0}
 \frac{1-\e^{-t(r+jm)}}
      {1-\e^{-t(r+jm-i\theta)}}
 \prod_{j\geq0}
 \frac{1-\e^{-t(s+jm)}}
      {1-\e^{-t(s+jm+i\theta)}}.
 \label{eq:Phi-product}
\end{align}
For example,
\[
 1-\e^{-t(r+jm)}\e^{i\theta t}
 =1-\e^{-t(r+jm-i\theta)};
\]
the opposite sign for $\cS$ accounts for $s+jm+i\theta$ in the second
denominator.  Absolute convergence follows from
$\sum_{j\geq0}\e^{-t(r+jm)}<\infty$ and its analogue for $s$.

Now set $q=\e^{-mt}$.  Since $a=r/m$ and $b=s/m$,
\[
 \e^{-t(r+jm)}=q^{a+j},\qquad
 \e^{-t(r+jm-i\theta)}=q^{a+j-i\theta/m},
\]
and similarly for the progression beginning at $s$.  Grouping the factors
according to the definition of the $q$-Pochhammer symbol yields
\begin{equation}\label{eq:Phi-q-product}
 \Phi_t(\theta)=
 \frac{(q^a;q)_\infty}{(q^{a-i\theta/m};q)_\infty}
 \frac{(q^b;q)_\infty}{(q^{b+i\theta/m};q)_\infty}.
\end{equation}

We next recall the $q$-gamma identity
\begin{equation}\label{eq:q-gamma}
 \Gamma_q(z)=(1-q)^{1-z}
 \frac{(q;q)_\infty}{(q^z;q)_\infty}
\end{equation}
and the convergence
\[
 \Gamma_q(z)\longrightarrow\Gamma(z)\qquad(q\uparrow1),
\]
which is locally uniform for $\operatorname{Re}z>0$; see, for example,
\cite[Chapter 1]{GasperRahman2004}.  Solving \eqref{eq:q-gamma} for the
$q$-Pochhammer symbol gives
\[
 (q^z;q)_\infty
 =(1-q)^{1-z}\frac{(q;q)_\infty}{\Gamma_q(z)}.
\]
Consequently, for $x$ and $y$ in a compact subset of the right half-plane,
\begin{equation}\label{eq:q-ratio}
 \frac{(q^x;q)_\infty}{(q^y;q)_\infty}
 =(1-q)^{y-x}\frac{\Gamma_q(y)}{\Gamma_q(x)}
 \sim(1-q)^{y-x}\frac{\Gamma(y)}{\Gamma(x)}.
\end{equation}
Substituting the definition of $\Gamma_q$ twice and cancelling the common
factor $(q;q)_\infty$ gives \eqref{eq:q-ratio}.  Since $a,b>0$, the numbers
$a-i\theta/m$ and $b+i\theta/m$ have positive real parts for real
$\theta$, so the quoted $q$-gamma convergence applies.
Apply \eqref{eq:q-ratio} first with
$(x,y)=(a,a-i\theta/m)$ and then with
$(x,y)=(b,b+i\theta/m)$.  The first ratio contributes the power
$(1-q)^{-i\theta/m}$, and the second contributes
$(1-q)^{i\theta/m}$.  These powers cancel exactly.  We obtain
\[
 \Phi_t(\theta)\longrightarrow
 \frac{\Gamma(a-i\theta/m)}{\Gamma(a)}
 \frac{\Gamma(b+i\theta/m)}{\Gamma(b)}
 =\phi(\theta).
\]
Because the $\Gamma_q$ convergence is locally uniform, so is this
convergence for $\theta$ in compact subsets of $\mathbb R$.

By \eqref{eq:gamma-random-variable}, for $c>0$ and $u\in\mathbb R$,
\[
 \EE G_c^{iu}
 =\frac1{\Gamma(c)}\int_0^\infty x^{c+iu-1}\e^{-x}\,dx
 =\frac{\Gamma(c+iu)}{\Gamma(c)}.
\]
Using independence of $G_a$ and $G_b$, the characteristic function of
$W=m^{-1}(\log G_b-\log G_a)$ is therefore
\[
 \EE\e^{i\theta W}
 =\EE G_b^{i\theta/m}\,\EE G_a^{-i\theta/m}
 =\frac{\Gamma(a-i\theta/m)\Gamma(b+i\theta/m)}
 {\Gamma(a)\Gamma(b)}
 =\phi(\theta).
\]
In particular, $\phi$ is continuous at the origin and $\phi(0)=1$.
We use Levy's continuity theorem in the following form (see
\cite[Chapters 2--3]{Billingsley1999}): if $\chi_n$ are
characteristic functions and $\chi_n(\theta)\to\chi(\theta)$ pointwise,
where $\chi$ is continuous at the origin, then $\chi$ is a characteristic
function and the corresponding random variables converge in distribution.
Applying this theorem to $\Phi_t$ gives
\eqref{eq:unconditioned-convergence}.
\end{proof}

\section{Conditioning on the total size}
\label{sec:conditioning}

Proposition~\ref{prop:unconditioned} concerns a random partition whose total
size fluctuates.  We now show that the same limit holds after conditioning
on $T=n$, provided $t=t_n$.  This is the only place where a local, rather
than global, limit estimate is required.  The distinction matters: weak
convergence (another name for convergence in distribution) describes
probabilities of intervals on the fluctuation scale, whereas the
conditioning event $\{T=n\}$ is a single lattice point whose probability
tends to zero.  A local limit estimate supplies the finer approximation to
this individual point probability.

\subsection{Fourier formulation of the conditioning problem}

The purpose of this subsection is to express the conditioning operation in
terms of Fourier integrals.  Under the Boltzmann measure, the two quantities
of interest are random:
\[
 T=\sum_{k\geq1}kX_k \quad\text{and}\quad
 D=\sum_{k\geq1}\varepsilon_kX_k.
\]
The variable $T$ records the total size of the partition, whereas $D$
records the excess of parts from $\cR$ over parts from $\cS$.  The uniform
measure on partitions of $n$ is obtained by restricting the Boltzmann model
to the single event $T=n$.  Consequently, the problem is not to study $D$
alone, but to study $D$ and $T$ simultaneously and then extract the part
corresponding to $T=n$.

Put
\begin{equation}\label{eq:epsilon}
 \varepsilon_k=\1_{\{k\in\cR\}}-\1_{\{k\in\cS\}}.
\end{equation}
Thus $\varepsilon_k=1$ when $k\in\cR$, $\varepsilon_k=-1$ when
$k\in\cS$, and $\varepsilon_k=0$ otherwise.  The assumption
$r\not\equiv s\pmod m$ ensures that $\cR$ and $\cS$ are disjoint, so
$\varepsilon_k\in\{-1,0,1\}$ is unambiguous.  For each realization of the
multiplicities,
\begin{equation}\label{eq:phase-decomposition}
 \theta tD+\varphi T
 =\sum_{k\geq1}(\theta t\varepsilon_k+k\varphi)X_k.
\end{equation}
For example, a copy of a part $k\in\cR$ contributes the phase
$\theta t+k\varphi$, a copy of a part $k\in\cS$ contributes
$-\theta t+k\varphi$, and an unmarked part contributes only $k\varphi$.
This explains why the two Fourier variables have different roles: $\theta$
tracks the statistic $D$, while $\varphi$ tracks the integer-valued size
$T$.

For real $\theta,\varphi$, define the joint characteristic function
\begin{equation}\label{eq:J-def}
 J_t(\theta,\varphi)
 =\EE_t\exp(i\theta tD+i\varphi T).
\end{equation}
Here $\theta$ is the Fourier variable corresponding to $tD$, while
$\varphi$ is the Fourier variable corresponding to the total size $T$.
This is the characteristic function of the two-dimensional random vector
$(tD,T)$.  Setting one variable equal to zero gives the characteristic
function of the other coordinate:
\begin{equation}\label{eq:J-marginals}
 J_t(\theta,0)=\Phi_t(\theta),
 \qquad
 J_t(0,\varphi)=\EE_t\e^{i\varphi T}.
\end{equation}
The first equality connects this subsection with the $q$-gamma calculation
in Proposition~\ref{prop:unconditioned}.  The second one is the
characteristic function to be used for the local limit estimate of $T$.

We now apply Fourier inversion to the integer-valued variable $T$.  Since
$T$ takes values in $\{0,1,2,\ldots\}$, we may write
\begin{equation}\label{eq:J-series}
 J_t(\theta,\varphi)
 =\sum_{\ell\geq0}
 \EE_t\bigl(\e^{i\theta tD};T=\ell\bigr)\e^{i\ell\varphi}.
\end{equation}
The series is absolutely convergent, since the sum of the absolute values of
its coefficients is at most $1$.  The elementary orthogonality relation
\begin{equation}\label{eq:Fourier-orthogonality}
 \frac1{2\pi}\int_{-\pi}^{\pi}
 \e^{i(\ell-n)\varphi}\,d\varphi
 =\begin{cases}1,&\ell=n,\\0,&\ell\ne n,
 \end{cases}
\end{equation}
follows by integrating the exponential explicitly.  Thus multiplying
\eqref{eq:J-series} by $\e^{-in\varphi}$ and integrating over one period
selects exactly the coefficient with $\ell=n$.  This gives the lattice
Fourier inversion formulas
\begin{equation}\label{eq:Fourier-roadmap}
 \begin{aligned}
 \EE_t(\e^{i\theta tD};T=n)
 &=\frac1{2\pi}\int_{-\pi}^{\pi}
   \e^{-in\varphi}J_t(\theta,\varphi)\,d\varphi,\\
 \PP_t(T=n)
 &=\frac1{2\pi}\int_{-\pi}^{\pi}
   \e^{-in\varphi}J_t(0,\varphi)\,d\varphi.
 \end{aligned}
\end{equation}
The lattice span is one because the Boltzmann model contains the part $1$;
there is therefore no additional periodicity factor in this inversion.

Here $\EE_t(Y;T=n)$ is shorthand for
$\EE_t[Y\1_{\{T=n\}}]$.  Therefore the conditional characteristic function
of $tD$ is
\begin{equation}\label{eq:conditional-cf-ratio}
 \EE_t(\e^{i\theta tD}\mid T=n)
 =\frac{\EE_t(\e^{i\theta tD};T=n)}{\PP_t(T=n)}
 =\frac{\displaystyle\int_{-\pi}^{\pi}
     \e^{-in\varphi}J_t(\theta,\varphi)\,d\varphi}
    {\displaystyle\int_{-\pi}^{\pi}
     \e^{-in\varphi}J_t(0,\varphi)\,d\varphi}.
\end{equation}
The factors $1/(2\pi)$ cancel in the last ratio.  Consequently, the goal
of the rest of Section~\ref{sec:conditioning} is to prove that the numerator
in \eqref{eq:conditional-cf-ratio} is asymptotic to
$\Phi_t(\theta)$ times the denominator.

The reason for this asymptotic relation is that, on the central Fourier arc
which gives the main contribution to the inversion integral, the joint
characteristic function approximately factors as
\[
 J_t(\theta,\varphi)
 =
 \Phi_t(\theta)J_t(0,\varphi)(1+o(1)).
\]
Thus the statistic $tD$ is asymptotically independent of the fluctuation of
the total size $T$.  The complementary Fourier arcs are negligible by the
minor-arc estimate in Lemma~\ref{lem:Fourier-estimates}.

For $t=t_n$, the phase $\e^{-in\varphi}$ cancels the linear term in the
Taylor expansion of $J_t$ because $n=\mu(t_n)$.  The remaining quadratic
term is Gaussian on the natural central scale
$\varphi\asymp\sigma(t_n)^{-1}\asymp t_n^{3/2}$.  Away from that scale we
need uniform decay.  The following elementary estimate supplies it.

\subsection{A trigonometric lower bound}

\begin{lemma}\label{lem:trig}
There is an absolute constant $c>0$ such that, for every integer $K\geq1$
and every $|\varphi|\leq\pi$,
\begin{equation}\label{eq:trig-sum}
 \sum_{k=1}^K(1-\cos(k\varphi))
 \geq c\min\{K^3\varphi^2,K\}.
\end{equation}
\end{lemma}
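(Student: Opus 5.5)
The plan is to evaluate the sum in closed form through the Dirichlet kernel and then split into two regimes according to the size of $K|\varphi|$. By symmetry it suffices to take $0<\varphi\le\pi$; at $\varphi=0$ both sides vanish. The standard identity
\[
 \sum_{k=1}^K\cos(k\varphi)=\frac{\sin((K+\tfrac12)\varphi)}{2\sin(\varphi/2)}-\frac12
\]
gives
\[
 \sum_{k=1}^K(1-\cos(k\varphi))=K+\frac12-\frac{\sin((K+\tfrac12)\varphi)}{2\sin(\varphi/2)}.
\]
The two regimes are $K\varphi\le1$, where the right side of \eqref{eq:trig-sum} is $cK^3\varphi^2$, and $K\varphi>1$, where it is at most $cK$. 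In each regime we prove the corresponding lower bound.

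\emph{Small-phase regime $K\varphi\le1$.} Every $k\le K$ has $k\varphi\le1$. The elementary inequality $1-\cos x\ge x^2/2-x^4/24\ge(11/24)x^2\ge x^2/4$ holds for $|x|\le1$. Hence
\[
 \sum_{k=1}^K(1-\cos k\varphi)\ge\frac{\varphi^2}{4}\sum_{k=1}^Kk^2\ge\frac{K^3\varphi^2}{12}.
\]
This needs no closed form.

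\emph{Large-phase regime $1<K\varphi\le K\pi$.} Here we use the closed form. Since $\sin(\varphi/2)\ge\varphi/\pi$ on $[0,\pi]$, the subtracted term is at most $\pi/(2\varphi)<\pi K/2$. That is not yet below $K$, so this crude bound does not suffice directly, and this is the main obstacle. To overcome it, split further at a threshold $K\varphi\ge A$, with $A$ an absolute constant such as $A=4$. For $K\varphi\ge A$ the subtracted term is at most $\pi K/(2A)<K/2$, so the sum exceeds $K/2$. For $1<K\varphi<A$, apply the small-phase argument only to the indices $k\le 1/\varphi$. There are at least $\lfloor1/\varphi\rfloor\ge\max(1,1/(2\varphi))\ge K/(2A)$ such indices, and the sum over them is at least
\[
 \frac{\varphi^2}{12}\lfloor1/\varphi\rfloor^3\gtrsim\varphi^{-1}\gtrsim K/A.
\]
Combining the three cases gives \eqref{eq:trig-sum} with an explicit absolute $c$, for instance $c=1/(96A)$ after tracking the constants.

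The only delicate point is the intermediate window $K\varphi\asymp1$, where neither the quadratic bound over all $k$ nor the Dirichlet-kernel bound applies. The fix is to truncate to $k\le1/\varphi$, which always contains at least one index because $\varphi\le\pi$ gives $\lfloor1/\varphi\rfloor\ge1$ when $\varphi\le1$. In the case $\varphi>1$ this truncated range is empty. There we instead bound the $k=1$ term alone by $1-\cos\varphi\ge1-\cos1$, which is adequate only while $K<A$. When $K\ge A$ and $\varphi>1$ we have $K\varphi\ge A$, so that case is already covered by the Dirichlet-kernel bound.
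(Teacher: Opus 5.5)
Your proof is correct and follows essentially the same three-regime strategy as the paper: the quadratic bound $1-\cos x\geq c_0x^2$ over all $k$ when $K|\varphi|\leq1$, the same quadratic bound restricted to an initial segment of indices when $K|\varphi|\asymp1$, and the geometric (Dirichlet-kernel) sum when $K|\varphi|\geq4$. The differences are cosmetic---you truncate at $k\leq1/\varphi$ rather than $k\leq K/4$, and you handle the few small-$K$ cases with the single term $1-\cos\varphi\geq1-\cos1$ instead of the paper's compactness remark for $K<8$---and they have the small benefit of giving an explicit constant, $c=1/384$.
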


\begin{proof}
We may assume $\varphi\geq0$ because the sum is even in $\varphi$.  We use
the elementary inequality
\begin{equation}\label{eq:cos-quadratic}
 1-\cos x\geq c_0x^2,\qquad |x|\leq1,
\end{equation}
for an absolute constant $c_0>0$.

First suppose $0\leq\varphi\leq K^{-1}$.  Then
$|k\varphi|\leq1$ for all $1\leq k\leq K$, and
\eqref{eq:cos-quadratic} gives
\[
 \sum_{k=1}^K(1-\cos(k\varphi))
 \geq c_0\varphi^2\sum_{k=1}^Kk^2
 \geq c_1K^3\varphi^2.
\]

Next suppose $K^{-1}\leq\varphi\leq4K^{-1}$.  For
$1\leq k\leq\lfloor K/4\rfloor$ we still have $k\varphi\leq1$.  Hence,
for $K\geq8$,
\[
 \sum_{k=1}^K(1-\cos(k\varphi))
 \geq c_0\varphi^2
 \sum_{k\leq K/4}k^2
 \geq c_2K^3\varphi^2
 \geq c_2K.
\]
For the finitely many cases $1\leq K<8$, the same conclusion follows by
decreasing the absolute constant $c$ to the minimum of the corresponding
positive continuous ratios on $[0,\pi]$.

Finally, suppose $4K^{-1}\leq\varphi\leq\pi$.  Since
\[
 \sum_{k=1}^K(1-\cos(k\varphi))
 =K-\operatorname{Re}\sum_{k=1}^K\e^{ik\varphi},
\]
the geometric-sum formula and
$\sin(x)\geq2x/\pi$ for $0\leq x\leq\pi/2$ give
\[
 \left|\sum_{k=1}^K\e^{ik\varphi}\right|
 \leq\frac{1}{|\sin(\varphi/2)|}
 \leq\frac{\pi}{|\varphi|}
 \leq\frac{\pi K}{4}.
\]
Therefore the sum is at least $(1-\pi/4)K$.  Combining the three ranges and
decreasing the absolute constant if necessary proves
\eqref{eq:trig-sum}.
\end{proof}

\subsection{Central and complementary Fourier arcs}

Let $L_t=\log(1/t)$.  The central arc
$|\varphi|\leq L_tt^{3/2}$ is wider than the Gaussian scale
$t^{3/2}$ by a slowly growing factor, but still narrow enough for a
third-order Taylor expansion to be uniform.  The complementary arc is the
rest of $[-\pi,\pi]$.  The next lemma treats both regions.  Its first part
says, in characteristic-function language, that $tD$ and
$t^{3/2}(T-\mu(t))$ become asymptotically independent: their joint limiting
characteristic function factors into the product of the two individual
characteristic functions.  Its second part is the uniform decay needed to
discard all noncentral frequencies.

The central arc is the neighborhood of $0$ in the Fourier variable.  Near
$\varphi=0$, the characteristic function has a quadratic approximation;
outside this neighborhood, the product estimate below gives the required
decay.

\begin{lemma}\label{lem:Fourier-estimates}
Uniformly for $\theta$ in a fixed compact interval,
as $t\downarrow0$ the following statements hold.

\begin{enumerate}
\item For $|u|\leq L_t$,
\begin{equation}\label{eq:central-expansion}
 \frac{\e^{-iu t^{3/2}\mu(t)}
 J_t(\theta,u t^{3/2})}{\Phi_t(\theta)}
 =\exp\left(-\frac{\pi^2u^2}{6}+o(1)\right),
\end{equation}
where the $o(1)$ is uniform in $u$ and $\theta$.

\item There are constants $c,C>0$ such that, whenever
$L_tt^{3/2}\leq|\varphi|\leq\pi$,
\begin{equation}\label{eq:minor-arc-bound}
 |J_t(\theta,\varphi)|
 \leq C\exp\left[-c\min\left(
 \frac{\varphi^2}{t^3},\frac1t\right)\right].
\end{equation}
\end{enumerate}
\end{lemma}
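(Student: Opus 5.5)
The plan is to work with the explicit product form of $J_t$. By independence and \eqref{eq:single-geometric-pgf} with $z=\e^{i(\theta t\varepsilon_k+k\varphi)}$,
\[
 J_t(\theta,\varphi)=\prod_{k\geq1}\frac{1-\e^{-tk}}{1-\e^{-tk+i\theta t\varepsilon_k+ik\varphi}},
\]
which factors as $\Phi_t(\theta)J_t(0,\varphi)\cdot E_t(\theta,\varphi)$. Here the correction $E_t$ is a product over $k\in\cR\cup\cS$ only, of the form
\[
 \prod_{k\in\cR\cup\cS}\frac{(1-\e^{-tk+i\theta t\varepsilon_k})(1-\e^{-tk+ik\varphi})}{(1-\e^{-tk})(1-\e^{-tk+i\theta t\varepsilon_k+ik\varphi})}.
\]

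\textbf{Part 1.} I would prove two estimates.

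First, for $\varphi=ut^{3/2}$ with $|u|\leq L_t$, the function $\log J_t(0,\varphi)-i\varphi\mu(t)$ equals $-\varphi^2\sigma^2(t)/2+O(|\varphi|^3\kappa_3(t))$. Here $\kappa_3(t)=\sum_k k^3\,\mathrm{(third\ cumulant\ of\ }X_k)\asymp t^{-4}$, by the same Riemann-sum argument as in Lemma~\ref{lem:moments}. The error is then $O(L_t^3t^{9/2}t^{-4})=O(L_t^3t^{1/2})=o(1)$. Lemma~\ref{lem:moments} gives $\varphi^2\sigma^2/2=\pi^2u^2/6+O(u^2t)$, and $u^2t=o(1)$. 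Justifying the Taylor expansion termwise needs $|k\varphi|$ small, which fails for $k\gtrsim t^{-3/2}$. I would handle this by writing $\log\frac{1-q_k}{1-q_k\e^{ix}}$ as the series $\sum_\ell q_k^\ell(\e^{i\ell x}-1)/\ell$ and using the bound $|\e^{iy}-1-iy+y^2/2|\leq|y|^3/6$, which keeps everything controlled by the exact cumulant sums.

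Second, I need $\log E_t=o(1)$. Each factor's logarithm is the mixed difference
\[
 g(\alpha+\beta)-g(\alpha)-g(\beta)+g(0),\qquad g(x)=-\log(1-\e^{-tk+ix}),
\]
with $\alpha=\theta t\varepsilon_k$ and $\beta=k\varphi$. This is bounded by $|\alpha||\beta|\sup|g''|$, and $|g''|\lesssim \e^{-tk}/|1-\e^{-tk+i\cdot}|^2\lesssim (tk)^{-2}$ for $tk\lesssim1$ (with exponential decay for larger $tk$). A slight care is needed because $|\beta|$ may exceed $tk$; in that range I would instead use the cruder bound $|\alpha|\sup|g'|\lesssim t/(tk)$ when $k|\varphi|\gtrsim tk$. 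Summing over $k$ in the two progressions, with density $1/m$, gives
\[
 \sum_k t\,k|\varphi|(tk)^{-2}\e^{-ctk}\approx t^{-1}|\varphi|\log(1/t)\lesssim L_t^2t^{1/2}\to0.
\]
Uniformity in $\theta$ on compacts is clear throughout.

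\textbf{Part 2.} Use $|J_t(\theta,\varphi)|\leq|J_t(0,\varphi)|\cdot|E_t|$. Both the numerator and denominator pieces of $E_t$ involving $\theta$ are comparable, which gives a crude bound $|E_t|\leq C$ via $\log|E_t|\lesssim\sum_{k\in\cR\cup\cS}t\,\e^{-tk}/|1-\e^{-tk+i\cdot}|$. Here I expect an unbounded $\log(1/t)$-type loss, since the sum is $\approx\sum_k t/(tk)$. I would avoid this by bounding $|J_t(\theta,\varphi)|$ directly, factor by factor, without splitting off $\Phi_t$. We have
\[
 \Bigl|\frac{1-\rho}{1-\rho\e^{ix}}\Bigr|^2=\Bigl(1+\frac{2\rho(1-\cos x)}{(1-\rho)^2}\Bigr)^{-1}\leq1,
\]
so I may discard all $k$ except the unmarked ones with $k\leq K:=\lfloor1/t\rfloor$. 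For those, $\rho_k\geq\e^{-1}$ and $(1-\rho_k)^2\leq1$, so each factor is at most $\exp(-c(1-\cos k\varphi))$. The unmarked $k\leq K$ have density $1-2/m>0$ when $m\geq3$. (If $m=2$ there are no unmarked parts; then I would keep all $k$ and use $\cos(k\varphi\pm\theta t)$ with $\theta t\to0$, absorbing the shift.) The trigonometric sum over a progression is handled like Lemma~\ref{lem:trig}. The simplest route is to apply that lemma to multiples $k=jm$, namely $\varphi'=m\varphi$ reduced mod $2\pi$; I would instead keep all $k\leq K$ and treat the shifted phases $k\varphi+\theta t\varepsilon_k$ by $1-\cos(x+\delta)\geq\frac12(1-\cos x)-O(\delta^2)$, with total error $O(Kt^2)=O(t)$. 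This gives
\[
 |J_t|\leq C\exp(-c\min\{K^3\varphi^2,K\})=C\exp(-c\min\{\varphi^2/t^3,1/t\}).
\]

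\textbf{Main obstacle.} I expect the main difficulty to be the uniform control of the cross term $E_t$ on the central arc, in the range where $k|\varphi|$ is not small. The Part~2 bound is then routine given Lemma~\ref{lem:trig}.
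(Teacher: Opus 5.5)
Your proposal is correct and is essentially the paper's proof: the same product formula, a third-order expansion on the central arc in which the $\theta$-shift is a perturbation of size $O(t)$ on the marked parts, and, on the complementary arc, the modulus bound over all $k\le\lfloor 1/t\rfloor$ combined with Lemma~\ref{lem:trig} for the shifted phases (your mixed-difference bound for $\log E_t$ is the paper's estimate \eqref{eq:first-mixed-bound} in another form, and a Taylor remainder with $\sup_{\mathbb R}|h_k'''|$ needs no smallness of $k\varphi$, so the detour through the $\ell$-series is optional). Do keep the marked $k$ in Part 2, as you finally decide: the tentative restriction to unmarked $k$, or to multiples of $m$, fails for instance when $(m,r,s)=(3,1,2)$ and $\varphi=2\pi/3$, since then every unmarked factor has modulus one and all of the decay comes from the parts in $\cR\cup\cS$.
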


\begin{proof}
Fix $M>0$ and suppose $|\theta|\leq M$.  Write $\rho_k=\e^{-tk}$,
$\alpha_k=\theta t\varepsilon_k$.  Since
$\theta tD+\varphi T=\sum_{k\geq1}
(\alpha_k+k\varphi)X_k$, formula
\eqref{eq:single-geometric-pgf} gives the exact product
\begin{equation}\label{eq:joint-product}
 J_t(\theta,\varphi)
 =\prod_{k\geq1}
 \frac{1-\rho_k}
 {1-\rho_k\e^{i(k\varphi+\alpha_k)}}.
\end{equation}
The product converges locally uniformly in real $(\theta,\varphi)$ because
the logarithm of its $k$th factor is $O(\rho_k)$ for large $k$.

For the central expansion, put
\[
 h_k(z)=-\log(1-\rho_k\e^{iz}).
\]
Because $|\rho_k|<1$, the curve $1-\rho_k\e^{iz}$ stays in the open right
half-plane for real $z$; hence the principal branch of the logarithm is
well defined and smooth.  For large $k$ the corresponding logarithms are
$O(\rho_k)$, uniformly when $|\theta|\leq M$ and $|\varphi|\leq\pi$.
Thus the following series converges normally.  Define its sum by
\begin{equation}\label{eq:log-joint-ratio-def}
 \Xi_t(\theta,\varphi)
 :=\sum_{k\geq1}\{h_k(\alpha_k+k\varphi)-h_k(\alpha_k)\}.
\end{equation}
Exponentiating term by term gives
\begin{equation}\label{eq:joint-ratio-exponential}
 \frac{J_t(\theta,\varphi)}{\Phi_t(\theta)}
 =\exp\bigl(\Xi_t(\theta,\varphi)\bigr).
\end{equation}
Direct differentiation gives
\[
 h_k'(z)=\frac{i\rho_k\e^{iz}}{1-\rho_k\e^{iz}},
 \qquad
 h_k''(z)=-\frac{\rho_k\e^{iz}}
 {(1-\rho_k\e^{iz})^2},
\]
and
\[
 h_k'''(z)
 =-\frac{i\rho_k\e^{iz}(1+\rho_k\e^{iz})}
 {(1-\rho_k\e^{iz})^3}.
\]
In particular, at zero,
\[
 h_k'(0)=i\frac{\rho_k}{1-\rho_k},\qquad
 h_k''(0)=-\frac{\rho_k}{(1-\rho_k)^2}.
\]
For real $z$,
\begin{equation}\label{eq:h-derivative-bounds}
 |h_k''(z)|\leq C\frac{\rho_k}{(1-\rho_k)^2},
 \qquad
 |h_k'''(z)|\leq C\frac{\rho_k(1+\rho_k)}{(1-\rho_k)^3},
\end{equation}
because $|1-\rho_k\e^{iz}|\geq1-\rho_k$.

We now quantify the effect of the small shifts $\alpha_k$.  By the
mean-value theorem, \eqref{eq:h-derivative-bounds}, and
$|\alpha_k|\leq Mt$,
\[
 |h_k'(\alpha_k)-h_k'(0)|
 \leq Mt\sup_{z\in\mathbb R}|h_k''(z)|,
 \qquad
 |h_k''(\alpha_k)-h_k''(0)|
 \leq Mt\sup_{z\in\mathbb R}|h_k'''(z)|.
\]
Split the resulting sums at $k=1/t$.  When $k\leq1/t$, we have
$1-\rho_k=1-\e^{-tk}\asymp tk$; when $k>1/t$, the denominators in
\eqref{eq:h-derivative-bounds} are bounded away from zero and the numerators
decay like $\e^{-tk}$.  These observations give
\begin{align}
 \sum_{k\geq1}k|h_k'(\alpha_k)-h_k'(0)|
 &=O\left(t^{-1}\log\frac1t\right),
 \label{eq:first-mixed-bound}\\
 \sum_{k\geq1}k^2|h_k''(\alpha_k)-h_k''(0)|
 &=O\left(t^{-2}\log\frac1t\right),
 \label{eq:second-mixed-bound}\\
 \sum_{k\geq1}k^3\sup_{z\in\mathbb R}|h_k'''(z)|
 &=O(t^{-4}).
 \label{eq:third-bound}
\end{align}
For completeness, below the split the three summands are bounded,
respectively, by
\[
 \frac{C}{tk},\qquad \frac{C}{t^2k},\qquad \frac{C}{t^3}.
\]
Their sums are $O(t^{-1}\log(1/t))$,
$O(t^{-2}\log(1/t))$, and $O(t^{-4})$.  Above the split, comparison with
$\sum_{k>1/t} k^j\e^{-tk}=O(t^{-j-1})$ gives bounds no larger than these;
indeed, an integral comparison gives
\[
 \sum_{k>1/t}k^j\e^{-tk}
 \leq C_j\int_{1/t}^{\infty}x^j\e^{-tx}\,dx
 =O(t^{-j-1}).
\]
Here and below all constants may depend on the fixed bound $M$, but not on
$t$, $u$, or $\theta$ with $|\theta|\leq M$.

Taylor's formula at $\alpha_k$ reads
\[
 \begin{aligned}
 h_k(\alpha_k+k\varphi)-h_k(\alpha_k)
 ={}&k\varphi h_k'(\alpha_k)
 +\frac{k^2\varphi^2}{2}h_k''(\alpha_k)+R_k,\\
 |R_k|\leq{}&C|k\varphi|^3
 \sup_{z\in\mathbb R}|h_k'''(z)|.
 \end{aligned}
\]
The identities
\[
 \sum_{k\geq1}kh_k'(0)=i\mu(t),\qquad
 \sum_{k\geq1}k^2h_k''(0)=-\sigma^2(t)
\]
identify the linear and quadratic main terms.  Summing the Taylor formula
and applying \eqref{eq:first-mixed-bound}--\eqref{eq:third-bound} gives
\begin{align}
 \Xi_t(\theta,\varphi)-i\varphi\mu(t)
 ={}&-\frac12\sigma^2(t)\varphi^2
 +O\left(|\varphi|t^{-1}\log\frac1t\right)\notag\\
 &+O\left(\varphi^2t^{-2}\log\frac1t\right)
 +O(|\varphi|^3t^{-4}).
 \label{eq:Taylor-central}
\end{align}
The first term is the contribution of the variance of $T$; the remaining
terms are error terms.  Since $\sigma^2(t)\asymp t^{-3}$, the natural scale
for $\varphi$ is $t^{3/2}$.
Take $\varphi=ut^{3/2}$ and $|u|\leq L_t$.  Every error term in
\eqref{eq:Taylor-central} tends to zero uniformly, because the three error
bounds become, respectively,
\[
 O\left(t^{1/2}L_t\log\frac1t\right),\qquad
 O\left(tL_t^2\log\frac1t\right),\qquad
 O(t^{1/2}L_t^3).
\]
In addition,
Lemma~\ref{lem:moments} gives
\[
 \sup_{|u|\leq L_t}
 \left|\left(t^3\sigma^2(t)-\frac{\pi^2}{3}\right)u^2\right|
 =O(tL_t^2)=o(1).
\]
This proves \eqref{eq:central-expansion}.

The same estimate supplies the domination needed later.  Taking real parts
in \eqref{eq:Taylor-central}, using
$t^3\sigma^2(t)\to\pi^2/3$ from Lemma~\ref{lem:moments}, and using the
uniform error bounds just listed, gives, for all sufficiently small $t$ and
$|u|\leq L_t$,
\[
 \Re\left(\Xi_t(\theta,u t^{3/2})
       -iu t^{3/2}\mu(t)\right)
 \leq -c u^2+C
\]
with constants independent of $u$ and $\theta$ in the fixed compact set.
Indeed, the three error terms are uniformly $o(1)$ on this range, while
the quadratic term is $-(\pi^2/6+o(1))u^2$; after reducing $c>0$ and
enlarging the constant to cover bounded $u$, the displayed inequality
follows.
Consequently the rescaled central integrand is bounded by
$C'\e^{-cu^2}$.

It remains to prove \eqref{eq:minor-arc-bound}.  For a real phase $x$,
\[
 |1-\rho_k\e^{ix}|^2
 =(1-\rho_k)^2+2\rho_k(1-\cos x).
\]
Taking absolute values factor by factor in \eqref{eq:joint-product}
therefore gives
\begin{equation}\label{eq:modulus-product}
 |J_t(\theta,\varphi)|^2
 =\prod_{k\geq1}
 \frac{(1-\rho_k)^2}
 {(1-\rho_k)^2+2\rho_k\{1-\cos(k\varphi+\alpha_k)\}}.
\end{equation}
Every factor in this product is at most one, so discarding factors can only
increase the right-hand side.  Restrict to
$1\leq k\leq K=\lfloor1/t\rfloor$.  On this range
$\rho_k\geq\e^{-1}$ and $(1-\rho_k)^2\leq1$, whence
\[
 \frac{2\rho_k(1-\cos x)}{(1-\rho_k)^2}
 \geq2\e^{-1}(1-\cos x).
\]
Writing each retained factor as $(1+A_k)^{-1}$, taking logarithms, and
using $\log(1+cx)\geq c'x$ for $0\leq x\leq2$, with
$x=1-\cos(k\varphi+\alpha_k)$, we obtain (after taking
the square root of the product estimate)
\begin{equation}\label{eq:product-to-trig}
 |J_t(\theta,\varphi)|
 \leq\exp\left[-c_2\sum_{k=1}^K
 \{1-\cos(k\varphi+\alpha_k)\}\right].
\end{equation}
Thus the probabilistic estimate has been reduced to a deterministic
trigonometric sum.  The previous lemma says that many of the phases cannot
all be close to multiples of $2\pi$ unless $\varphi$ is very small.  This
is exactly the cancellation mechanism that makes the noncentral Fourier
frequencies negligible.
The shifts $\alpha_k$ do not destroy the trigonometric lower bound.  Indeed,
since $|\cos(x+y)-\cos x|\leq|y|$ and
$|\alpha_k|\leq Mt$,
Lemma~\ref{lem:trig} implies
\begin{align*}
 \sum_{k=1}^K\{1-\cos(k\varphi+\alpha_k)\}
 &\geq c_3\min\{K^3\varphi^2,K\}-MKt\\
 &\geq c_4\min\left\{\frac{\varphi^2}{t^3},\frac1t\right\}-C_M.
\end{align*}
Here $K\asymp t^{-1}$ as $t\downarrow0$, which justifies the second line.
On the stated complementary arc,
$|\varphi|\geq L_tt^{3/2}$, so the minimum is at least a constant times
$L_t^2$.  It diverges, whereas the error $C_M$ is bounded.  After decreasing
$c_4$ and enlarging a fixed prefactor $C$, the error is absorbed.
Combining the result with \eqref{eq:product-to-trig} proves
\eqref{eq:minor-arc-bound}.
\end{proof}

\subsection{Fourier inversion and deconditioning}

We can now return to the ratio in \eqref{eq:Fourier-roadmap}.  The following
proposition is the precise deconditioning statement: after the saddle-point
choice $t=t_n$, imposing the exact value $T=n$ does not alter the limiting
characteristic function of $tD$.

\begin{proposition}
\label{prop:conditioning}
For every fixed $\theta\in\mathbb R$,
\begin{equation}\label{eq:conditional-characteristic}
 \EE_{t_n}\left(\e^{i\theta t_nD}\mid T=n\right)
 \longrightarrow\phi(\theta),
\end{equation}
where $\phi$ is given by \eqref{eq:phi-limit}.
\end{proposition}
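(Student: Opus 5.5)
The plan is to start from the ratio \eqref{eq:conditional-cf-ratio} with $t=t_n$ and show that the numerator is asymptotic to $\Phi_{t_n}(\theta)$ times the denominator. Proposition~\ref{prop:unconditioned} then gives $\Phi_{t_n}(\theta)\to\phi(\theta)$, which finishes the proof.

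Fix $\theta$ in a compact interval $|\theta|\le M$, which also covers $\theta=0$ for the denominator, and write $t=t_n$. Split $[-\pi,\pi]$ into the central arc $|\varphi|\le L_tt^{3/2}$ and its complement.

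On the central arc, substitute $\varphi=ut^{3/2}$. Using $n=\mu(t)$, the numerator's central part becomes
\[
 t^{3/2}\,\Phi_t(\theta)\int_{|u|\le L_t}
 \frac{\e^{-iut^{3/2}\mu(t)}J_t(\theta,ut^{3/2})}{\Phi_t(\theta)}\,du .
\]
By part 1 of Lemma~\ref{lem:Fourier-estimates}, the integrand converges to $\e^{-\pi^2u^2/6}$ uniformly on compact $u$-sets. The domination bound $C'\e^{-cu^2}$ established at the end of that proof allows dominated convergence, so the integral tends to $\int_{\mathbb R}\e^{-\pi^2u^2/6}\,du=\sqrt{6/\pi}$.

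Applying the same argument with $\theta=0$, where $\Phi_t(0)=1$, shows that the central part of the denominator is $t^{3/2}\sqrt{6/\pi}\,(1+o(1))$.

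On the complementary arc, part 2 of Lemma~\ref{lem:Fourier-estimates} gives $|J_t(\theta,\varphi)|\le C\exp[-c\min(\varphi^2/t^3,1/t)]$. Split this arc at $|\varphi|=t$.

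\begin{enumerate}
\item For $L_tt^{3/2}\le|\varphi|\le t$, the minimum is $\varphi^2/t^3$. Substituting $\varphi=ut^{3/2}$, the contribution is at most $Ct^{3/2}\int_{|u|\ge L_t}\e^{-cu^2}\,du=o(t^{3/2})$, since $L_t\to\infty$.
\item For $t\le|\varphi|\le\pi$, the minimum is $1/t$. The contribution is at most $2\pi C\e^{-c/t}$, which is $o(t^{3/2})$.
\end{enumerate}

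Hence both the numerator and denominator integrals equal $t^{3/2}\sqrt{6/\pi}$ times $\Phi_t(\theta)$ and $1$ respectively, up to additive errors of order $o(t^{3/2})$.

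It remains to form the ratio. Write the numerator as $t^{3/2}\bigl(\sqrt{6/\pi}\,\Phi_t(\theta)+o(1)\bigr)$ and the denominator as $t^{3/2}\bigl(\sqrt{6/\pi}+o(1)\bigr)$. Their quotient is $\Phi_t(\theta)+o(1)$, using $|\Phi_t|\le1$.

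The additive form of the error matters here. We do not need $\Phi_t(\theta)$ to be bounded away from zero, and $\phi$ never vanishes anyway.

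The main obstacle is the uniform control on the central arc, that is, the domination and the cancellation of the linear phase. That work is already carried out in Lemma~\ref{lem:Fourier-estimates}, so the remaining step is bookkeeping.

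Finally, letting $n\to\infty$ (so $t_n\to0$ by Lemma~\ref{lem:moments}) and applying Proposition~\ref{prop:unconditioned} gives $\Phi_{t_n}(\theta)\to\phi(\theta)$, which yields \eqref{eq:conditional-characteristic}.
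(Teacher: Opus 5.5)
Your proposal is correct and follows essentially the same route as the paper's proof: the central/complementary arc split, the substitution $\varphi=ut^{3/2}$ with the Gaussian domination from Lemma~\ref{lem:Fourier-estimates}, the split of the complementary arc at $|\varphi|=t$, and finally Proposition~\ref{prop:unconditioned}. The only difference is in how the ratio is formed: you use additive errors together with $|\Phi_t(\theta)|\leq1$, whereas the paper writes the numerator multiplicatively as $\sim\Phi_{t_n}(\theta)\,t_n^{3/2}/\sqrt{2\pi(\pi^2/3)}$ and appeals to the non-vanishing of $\Phi_t$ and $\phi$.
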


\begin{proof}
Let
\[
 \mathfrak C_t=\{\varphi:|\varphi|\leq L_tt^{3/2}\},
 \qquad
 \mathfrak c_t=[-\pi,\pi]\setminus\mathfrak C_t
\]
denote the central and complementary arcs.  We first apply the second line
of \eqref{eq:Fourier-roadmap}.  Take $\theta=0$; then
$\Phi_t(0)=1$.  Since $\mu(t_n)=n$, the oscillating factor
$\e^{-in\varphi}$ exactly produces the centered expression on the left of
\eqref{eq:central-expansion}.  On $\mathfrak C_{t_n}$, substitute
$\varphi=u t_n^{3/2}$, so that $d\varphi=t_n^{3/2}\,du$ and
$|u|\leq L_{t_n}$.  The first assertion of
Lemma~\ref{lem:Fourier-estimates}, namely the central-arc expansion
\eqref{eq:central-expansion}, gives the uniform estimate
\[
 \e^{-in u t_n^{3/2}}J_{t_n}(0,u t_n^{3/2})
 =\exp\left(-\frac{\pi^2u^2}{6}+o(1)\right),
 \qquad |u|\leq L_{t_n},
\]
because $\mu(t_n)=n$ and $\Phi_{t_n}(0)=1$.  The second assertion of
the same lemma, \eqref{eq:minor-arc-bound}, shows that the integral over
$\mathfrak c_{t_n}$ is $o(t_n^{3/2})$.  Therefore the central integral is
\[
 \frac{t_n^{3/2}}{2\pi}
 \int_{-L_{t_n}}^{L_{t_n}}
 \exp\left(-\frac{\pi^2u^2}{6}+o(1)\right)du,
\]
and the Gaussian domination established in Lemma~\ref{lem:Fourier-estimates}
allows us to extend the limits to $\pm\infty$ and apply dominated
convergence.  Thus
\begin{align}
 \PP_{t_n}(T=n)
 &=\frac1{2\pi}\int_{-\pi}^{\pi}
   \e^{-in\varphi}J_{t_n}(0,\varphi)\,d\varphi\\
 &\sim\frac{t_n^{3/2}}{2\pi}
 \int_{-\infty}^{\infty}\exp\left(-\frac{\pi^2u^2}{6}\right)du\\
 &=\frac{t_n^{3/2}}
 {\sqrt{2\pi(\pi^2/3)}}.
 \label{eq:local-limit-proof}
\end{align}
For the last equality, use the elementary Gaussian identity
\[
 \int_{-\infty}^{\infty}e^{-a u^2}\,du=\sqrt{\frac{\pi}{a}}
 \quad(a>0),
 \qquad
 \int_{-\infty}^{\infty}e^{-\pi^2u^2/6}\,du
 =\sqrt{\frac{6}{\pi}}.
\]
Consequently,
\[
 \frac1{2\pi}\sqrt{\frac6\pi}
 =\frac1{\sqrt{2\pi(\pi^2/3)}},
\]
which is the constant in \eqref{eq:local-limit-proof}.

The Fourier peak has width $t^{3/2}$, and hence its integral, which gives the
lattice point probability $\PP_t(T=n)$, is of order $t^{3/2}$.  A global
central limit theorem does not by itself give this single-point estimate.

For completeness, the domination just used follows from the real part of
the Taylor estimate in the proof of
Lemma~\ref{lem:Fourier-estimates}; for all sufficiently small $t$, the
rescaled central integrand is bounded by $C\e^{-cu^2}$.
The integral over $\mathfrak c_{t_n}$ is negligible.  Indeed,
\eqref{eq:minor-arc-bound}, split at $|\varphi|=t_n$ and followed by
$u=\varphi/t_n^{3/2}$ on the first portion, gives the bound
\[
 Ct_n^{3/2}\int_{L_{t_n}}^\infty\e^{-cu^2}\,du
 +C\e^{-c/t_n}=o(t_n^{3/2}).
\]
This proves the displayed local limit formula for the denominator.

Now fix $\theta\in\mathbb R$ and apply the first line of
\eqref{eq:Fourier-roadmap}.  On the central arc,
\eqref{eq:central-expansion} has the same Gaussian factor, multiplied by
$\Phi_{t_n}(\theta)$.  The complementary-arc estimate is uniform for
$\theta$ in a compact set and is therefore still $o(t_n^{3/2})$.  It follows
that
\begin{align*}
 \EE_{t_n}\left(\e^{i\theta t_nD};T=n\right)
 &=\frac1{2\pi}\int_{-\pi}^{\pi}
 \e^{-in\varphi}J_{t_n}(\theta,\varphi)\,d\varphi\\
 &\sim\Phi_{t_n}(\theta)
 \frac{t_n^{3/2}}{\sqrt{2\pi(\pi^2/3)}}.
\end{align*}
Dividing this numerator estimate by the positive denominator
\eqref{eq:local-limit-proof} gives
\[
 \EE_{t_n}(\e^{i\theta t_nD}\mid T=n)
 =\Phi_{t_n}(\theta)(1+o(1)).
\]
There is no division-by-zero issue here: the $q$-Pochhammer representation
shows that $\Phi_t(\theta)\neq0$ for real $\theta$, and the locally uniform
limit in Proposition~\ref{prop:unconditioned} is nonzero.  Finally, using
$\Phi_{t_n}(\theta)\to\phi(\theta)$ from
Proposition~\ref{prop:unconditioned} proves
\eqref{eq:conditional-characteristic}.
\end{proof}

\begin{remark}
The estimate in the proof also gives the local limit formula
\[
 \PP_{t_n}(T=n)\sim
 \frac{t_n^{3/2}}{\sqrt{2\pi(\pi^2/3)}}.
\]
Combined with the exact identity
\[
 p(n)=Z(t_n)\e^{nt_n}\PP_{t_n}(T=n),
\]
this is the probabilistic saddle-point mechanism behind the Hardy--Ramanujan
asymptotic formula
\cite{HardyRamanujan1918}.
\end{remark}

\section{Proof of the main theorem and the commutation conjecture}
\label{sec:completion}

\begin{proof}[Proof of Theorem~\ref{thm:main}]
By the exact conditioning identity \eqref{eq:conditioning-identity}, the
conditional distribution of $D$ given $T=n$ under $\PP_{t_n}$ is the
distribution of $D(\Lambda_n)$.  In particular,
\[
 \EE\e^{i\theta t_nD(\Lambda_n)}
 =\EE_{t_n}(\e^{i\theta t_nD}\mid T=n).
\]
Proposition~\ref{prop:conditioning} shows that the right-hand side converges
to $\phi(\theta)$ for every real $\theta$.  Since $\phi$ is the
characteristic function of $W$, Levy's continuity theorem gives
\[
 t_nD(\Lambda_n)\xrightarrow{\ d\ }W.
\]
The variable $W=m^{-1}(\log G_b-\log G_a)$ has the continuous density
\eqref{eq:W-density}; in particular, $\PP(W=0)=0$, so the distribution has
no point mass (or atom) at zero.  The interval $(0,\infty)$ is therefore a
continuity set: its boundary $\{0\}$ has probability zero under the limiting
law.  Since
$t_n>0$, the events $\{D(\Lambda_n)>0\}$ and
$\{t_nD(\Lambda_n)>0\}$ coincide, and the portmanteau theorem yields
\[
 \lim_{n\to\infty}\PP(D(\Lambda_n)>0)=\PP(W>0).
\]
Here we invoke the following consequence of the portmanteau theorem (see
\cite[Chapter 2]{Billingsley1999}): if
$Y_n\xrightarrow{\ d\ }Y$ and $A$ is a Borel set whose boundary satisfies
$\PP(Y\in\partial A)=0$, then $\PP(Y_n\in A)\to\PP(Y\in A)$.  Applied
with $A=(0,\infty)$, this explains why the absence of an atom at the
decision boundary is exactly what is needed here.
It remains only to evaluate the last probability.  By monotonicity of the
logarithm,
\[
 W>0
 \quad\Longleftrightarrow\quad
 G_a<G_b
 \quad\Longleftrightarrow\quad
 \frac{G_a}{G_a+G_b}<\frac12.
\]
The beta--gamma identity proved in
Proposition~\ref{prop:finite-limit} shows that the ratio in the last line has
the $\operatorname{Beta}(a,b)$ distribution.  Its probability of lying in
$(0,1/2)$ is, by definition, $I_{1/2}(a,b)$.  This proves
\eqref{eq:bias-main}.  Formula \eqref{eq:tn-asymp-intro} was proved in
Lemma~\ref{lem:moments}, so all assertions of the theorem follow.
\end{proof}

We now combine the unrestricted calculation with the independently obtained
finite-truncation limit.  This is the precise point at which the two halves
of the paper meet.

\begin{corollary}
\label{cor:commutation}
With $C_{n,N}$ as in Section~\ref{sec:truncation},
\begin{equation}\label{eq:commutation}
 \lim_{n\to\infty}\lim_{N\to\infty}C_{n,N}
 =\lim_{N\to\infty}\lim_{n\to\infty}C_{n,N}
 =I_{1/2}(r/m,s/m).
\end{equation}
In particular, Conjecture 1 of Li and Zhao \cite{LiZhao2026} holds.
\end{corollary}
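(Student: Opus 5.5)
The proof combines the two independent halves already established; no new estimate is needed.

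For the first iterated limit, fix $n$ and apply the stabilization identity \eqref{eq:truncate-first}. Once $N$ is large enough that $r+m(N-1)\geq n$ and $s+m(N-1)\geq n$, both the numerator and the denominator defining $C_{n,N}$ agree with $p_{\cR>\cS}(n)$ and $p(n)$. Hence $\lim_{N\to\infty}C_{n,N}=p_{\cR>\cS}(n)/p(n)$ for each $n$. Letting $n\to\infty$ and invoking \eqref{eq:bias-main} of Theorem~\ref{thm:main} gives
\[
\lim_{n\to\infty}\lim_{N\to\infty}C_{n,N}=I_{1/2}(a,b).
\]

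For the second iterated limit, Proposition~\ref{prop:finite-limit} gives directly
\[
\lim_{N\to\infty}\lim_{n\to\infty}C_{n,N}=I_{1/2}(a,b),
\]
with $a=r/m$ and $b=s/m$.

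Two bookkeeping points need checking, and this is the only place a gap could hide.

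\begin{enumerate}
\item The inner limit $L_N=\lim_{n}C_{n,N}$ must exist for every sufficiently large $N$. This holds because the Li--Zhao finite-set theorem applies once $N\geq2$, by \eqref{eq:coprime-normalization}. The finitely many small $N$ are irrelevant to the outer limit.
\item The hypotheses \eqref{eq:distinct-residues} and \eqref{eq:coprime-normalization} under which both results were proved must be exactly those of Conjecture~1. Condition \eqref{eq:distinct-residues} makes $\cR$ and $\cS$ disjoint, and \eqref{eq:coprime-normalization} is the conjecture's stated normalization.
\end{enumerate}

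Both iterated limits therefore exist and equal $I_{1/2}(r/m,s/m)$, which is \eqref{eq:commutation}. This equality of the two iterated limits is precisely the content of Conjecture~1 of Li and Zhao.
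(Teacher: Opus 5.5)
Your proposal is correct and matches the paper's proof: the stabilization identity \eqref{eq:truncate-first} combined with Theorem~\ref{thm:main} gives the first iterated limit, and Proposition~\ref{prop:finite-limit} gives the second. Your two bookkeeping points, that the Li--Zhao formula applies for $N\geq2$ and that the hypotheses are those of Conjecture~1, are the same ones the paper addresses in its introduction.
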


\begin{proof}
For each fixed $n$, equation \eqref{eq:truncate-first} gives
\[
 \lim_{N\to\infty}C_{n,N}
 =\frac{p_{\cR>\cS}(n)}{p(n)}.
\]
Taking $n\to\infty$ and applying Theorem~\ref{thm:main} evaluates the first
iterated limit in \eqref{eq:commutation} as $I_{1/2}(a,b)$.  In the reverse
order, the inner limit is $L_N$ from \eqref{eq:Li-Zhao-formula}, and
Proposition~\ref{prop:finite-limit} shows that
$L_N\to I_{1/2}(a,b)$.  Both orders therefore give the same value.
\end{proof}

\section{Worked examples and exact computations}
\label{sec:numerics}

The beta law is explicit enough to produce concrete constants, and the
partition ratios themselves can be computed exactly for moderately large
$n$.  We give both kinds of examples.  Besides illustrating the size of the
bias, the exact coefficient experiment reveals finite-size oscillations
that are invisible in the limiting statement.

\subsection{Three explicit beta constants}

Write
\[
 B_x(a,b)=\int_0^x t^{a-1}(1-t)^{b-1}\,dt,
 \qquad I_x(a,b)=\frac{B_x(a,b)}{B(a,b)}.
\]
When $a+b=1$, the substitution $y=t/(1-t)$ gives the useful identity
\begin{equation}\label{eq:beta-half-transform}
 B_{1/2}(a,1-a)=\int_0^1\frac{y^{a-1}}{1+y}\,dy.
\end{equation}

\begin{example}
\label{ex:m3}
Take $(m,r,s)=(3,1,2)$.  Then $(a,b)=(1/3,2/3)$.  Euler's reflection
formula
\[
 \Gamma(z)\Gamma(1-z)=\frac{\pi}{\sin(\pi z)}
\]
(see \cite[Section 5.5]{Olver2010}) gives
\[
 B\left(\frac13,\frac23\right)
 =\Gamma\left(\frac13\right)\Gamma\left(\frac23\right)
 =\frac{2\pi}{\sqrt3}.
\]
In \eqref{eq:beta-half-transform}, put $y=u^3$.  A partial-fraction
integration gives
\[
 \begin{aligned}
 B_{1/2}\left(\frac13,\frac23\right)
 &=3\int_0^1\frac{du}{1+u^3}\\
 &=3\left(\frac{\log2}{3}+\frac{\pi}{3\sqrt3}\right)
 =\log2+\frac{\pi}{\sqrt3}.
 \end{aligned}
\]
Consequently,
\begin{equation}\label{eq:m3-constant}
 I_{1/2}\left(\frac13,\frac23\right)
 =\frac12+\frac{\sqrt3\log2}{2\pi}
 =0.6910760347\ldots.
\end{equation}
Thus $I_{1/2}(1/3,2/3)\approx 0.691$.
\end{example}

\begin{example}
\label{ex:m4-13}
For $(m,r,s)=(4,1,3)$, we have $(a,b)=(1/4,3/4)$ and
\[
 B\left(\frac14,\frac34\right)
 =\frac{\pi}{\sin(\pi/4)}=\pi\sqrt2.
\]
Putting $y=u^4$ in \eqref{eq:beta-half-transform} gives
\[
 B_{1/2}\left(\frac14,\frac34\right)
 =4\int_0^1\frac{du}{1+u^4}.
\]
Factoring $1+u^4$ over the reals, or using partial fractions, yields
\[
 \int_0^1\frac{du}{1+u^4}
 =\frac{\pi}{4\sqrt2}
  +\frac{\log(1+\sqrt2)}{2\sqrt2}.
\]
It follows that
\begin{equation}\label{eq:m4-13-constant}
 I_{1/2}\left(\frac14,\frac34\right)
 =\frac12+\frac{\log(1+\sqrt2)}{\pi}
 =0.7805499262\ldots.
\end{equation}
The limiting bias is now about $78.1\%$, appreciably stronger than in
Example~\ref{ex:m3}.
\end{example}

\begin{example}
\label{ex:m4-12}
For $(m,r,s)=(4,1,2)$, the parameters are $(a,b)=(1/4,1/2)$.  The
denominator is
\[
 B\left(\frac14,\frac12\right)
 =\frac{\sqrt\pi\,\Gamma(1/4)}{\Gamma(3/4)}.
\]
In the numerator, the substitution $t=u^4$ removes the algebraic
singularity at zero:
\[
 B_{1/2}\left(\frac14,\frac12\right)
 =4\int_0^{2^{-1/4}}\frac{du}{\sqrt{1-u^4}}.
\]
This is an elliptic integral rather than an elementary one.  Numerically,
\begin{equation}\label{eq:m4-12-constant}
 I_{1/2}\left(\frac14,\frac12\right)
 =\frac{4\Gamma(3/4)}
 {\sqrt\pi\,\Gamma(1/4)}
 \int_0^{2^{-1/4}}\frac{du}{\sqrt{1-u^4}}
 =0.6831128993\ldots.
\end{equation}
\end{example}

\subsection{An exact coefficient experiment}

We next compare the limiting constants with finite partition counts.  This
is an exact enumeration, not a Monte Carlo simulation.  Define
\[
 \varepsilon_k=\1_{\{k\in\cR\}}-\1_{\{k\in\cS\}}
\]
as in \eqref{eq:epsilon}, and introduce the bivariate generating function
\begin{equation}\label{eq:bivariate-experiment}
 \mathcal F(u,z)
 =\prod_{k\geq1}\frac1{1-z^{\varepsilon_k}u^k}
 =\sum_{n\geq0}\sum_{d\in\mathbb Z}c_n(d)u^nz^d.
\end{equation}
A choice of the term
$z^{j\varepsilon_k}u^{jk}$ from the $k$th factor means that the part $k$
occurs $j$ times.  Therefore
\[
 c_n(d)=|\{\lambda\in\mathcal P(n):D(\lambda)=d\}|.
\]

For reproducibility, the coefficients were computed by the following
unbounded-part recurrence.  Let $c_K(n,d)$ count partitions using only
parts at most $K$.  With $c_0(0,0)=1$ and all other initial values zero,
\begin{equation}\label{eq:exact-DP}
 c_K(n,d)=c_{K-1}(n,d)
 +\1_{\{n\geq K\}}c_K(n-K,d-\varepsilon_K).
\end{equation}
The first term counts partitions with no part $K$; after removing one copy
of $K$, the second term still permits further copies of $K$.  For
$0\leq n\leq N$, iterating \eqref{eq:exact-DP} through $K=N$ therefore
computes $c_n(d)$ exactly.  As an independent check, at every computed
value of $n$ we verified
\begin{equation}\label{eq:partition-checksum}
 \sum_d c_n(d)=p(n),
\end{equation}
where $p(n)$ was separately generated by Euler's pentagonal-number
recurrence (see \cite{Andrews1976}).  All calculations used arbitrary-precision integers; decimal
rounding enters only when the final ratios are displayed.  The code used to
produce the tables, \texttt{partition\_bias\_experiment.py}, is included
with the submission as supplementary material and implements
\eqref{eq:exact-DP}.

\subsection{The modulo \texorpdfstring{$3$}{3} experiment}

Table~\ref{tab:m3-experiment} reports
\[
 P_n^+=\frac{\sum_{d>0}c_n(d)}{p(n)},\qquad
 P_n^0=\frac{c_n(0)}{p(n)}
\]
for $(m,r,s)=(3,1,2)$.  A useful finite-size feature can be predicted
without computation.  Reducing the total size modulo $3$ gives
\[
 n\equiv N_{\cR}(\lambda)+2N_{\cS}(\lambda)
 \equiv D(\lambda)\pmod3.
\]
Thus a tie is possible only when $3\mid n$.  This congruence produces three
visible subsequences in the displayed data: the classes $n\equiv1,2\pmod3$
approach the limit from opposite sides, while the class $n\equiv0\pmod3$
also carries a positive tie probability.  Theorem~\ref{thm:main}, together
with Corollary~\ref{cor:direction}, implies that the common limiting bias is
the beta value and that the tie probability vanishes asymptotically.

\begin{table}[ht]
\centering
\small
\begin{tabular}{rcrrr}
\hline
$n$ & $n\bmod3$ & $P_n^+$ & $P_n^0$ &
$P_n^+-I_{1/2}(1/3,2/3)$ \\
\hline
198  & 0 & 0.631412 & 0.110941 & $-0.059664$ \\
199  & 1 & 0.706992 & 0        & $+0.015916$ \\
200  & 2 & 0.669956 & 0        & $-0.021120$ \\
398  & 2 & 0.675964 & 0        & $-0.015112$ \\
399  & 0 & 0.649016 & 0.078617 & $-0.042060$ \\
400  & 1 & 0.702244 & 0        & $+0.011168$ \\
798  & 0 & 0.661330 & 0.055813 & $-0.029746$ \\
799  & 1 & 0.698951 & 0        & $+0.007875$ \\
800  & 2 & 0.680340 & 0        & $-0.010736$ \\
1598 & 2 & 0.683440 & 0        & $-0.007636$ \\
1599 & 0 & 0.670063 & 0.039537 & $-0.021013$ \\
1600 & 1 & 0.696629 & 0        & $+0.005553$ \\
\hline
\end{tabular}
\caption{Exact partition ratios for $(m,r,s)=(3,1,2)$.  The predicted
limit is $0.6910760347\ldots$ from \eqref{eq:m3-constant}.}
\label{tab:m3-experiment}
\end{table}

The trend is consistent with both conclusions of
Theorem~\ref{thm:main}.  At the largest displayed values, the deviations of
the two nonzero residue classes have fallen to about $0.006$ and $0.008$,
while along the divisible-by-$3$ subsequence the tie probability decreases
from $0.111$ at $n=198$ to $0.0395$ at $n=1599$.  The latter behavior also
supports Corollary~\ref{cor:direction}'s assertion that ties have asymptotic
density zero.

\subsection{Comparison of the three examples}

For context, Table~\ref{tab:three-experiments} compares the three constants
above with exact finite-$n$ ratios.  The convergence need not be monotone:
the marked statistic is lattice-valued, and the relevant congruence class of
$n$ may change between rows.  What is important is the shrinking distance
to the corresponding beta value.

\begin{table}[ht]
\centering
\small
\begin{tabular}{ccrrr}
\hline
$(m,r,s)$ & beta-law limit & $P_{200}^+$ & $P_{400}^+$ &
$P_{800}^+$ \\
\hline
$(3,1,2)$ & 0.691076 & 0.669956 & 0.702244 & 0.680340 \\
$(4,1,2)$ & 0.683113 & 0.660300 & 0.666911 & 0.671622 \\
$(4,1,3)$ & 0.780550 & 0.733590 & 0.747546 & 0.757315 \\
\hline
\end{tabular}
\caption{Exact finite-size ratios compared with the limiting beta
probabilities.}
\label{tab:three-experiments}
\end{table}

These computations are not used in the proof.  They serve instead as
independent checks of the normalization, the direction and magnitude of the
bias, and the disappearance of the equality case.

\section{Consequences and further remarks}
\label{sec:further}

The beta expression contains more information than the commutation identity
alone.  We close by drawing out several consequences and by relating the
Boltzmann picture to the finite-set formula.

\subsection{The direction of the bias and the probability of a tie}

\begin{corollary}\label{cor:direction}
Under the assumptions of Theorem~\ref{thm:main},
\[
 I_{1/2}(r/m,s/m)
 \begin{cases}
 >1/2,&r<s,\\
 <1/2,&r>s.
 \end{cases}
\]
Moreover, the proportion of partitions for which the two multiplicity
counts are equal tends to zero.
\end{corollary}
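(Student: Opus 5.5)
The first assertion concerns only the beta distribution, and I would prove it by a symmetry comparison. Write $I_{1/2}(a,b)=\PP(G_a<G_b)$, as shown in the proof of Proposition~\ref{prop:finite-limit}. Suppose $r<s$, so $a<b$. Then $G_b$ stochastically dominates $G_a$ strictly. Concretely, realize $G_b=G_a'+G_{b-a}$ with $G_a'\sim\operatorname{Gamma}(a,1)$ and $G_{b-a}\sim\operatorname{Gamma}(b-a,1)$ independent of each other and of $G_a$; this uses the additivity of gamma shapes, which follows from the beta--gamma factorization already proved. The event $\{G_a<G_a'\}$ has probability exactly $1/2$, by exchangeability and the absence of ties. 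Adding the almost surely positive $G_{b-a}$ can only enlarge the event, so $\{G_a<G_a'\}\subseteq\{G_a<G_b\}$. The difference set $\{G_a'\le G_a<G_a'+G_{b-a}\}$ has positive probability, because all three variables have everywhere positive densities on $(0,\infty)$. Hence $I_{1/2}(a,b)>1/2$. For $r>s$, the identity $I_{1/2}(a,b)=1-I_{1/2}(b,a)$ (equivalently $\PP(G_a<G_b)+\PP(G_b<G_a)=1$) reduces to the first case. The case $r=s$ is excluded by \eqref{eq:distinct-residues}.

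A purely analytic alternative is available as a check. Use $I_{1/2}(a,b)-I_{1/2}(b,a)=B(a,b)^{-1}\int_0^{1/2}\bigl(u^{a-1}(1-u)^{b-1}-u^{b-1}(1-u)^{a-1}\bigr)\,du$. The integrand equals $u^{a-1}(1-u)^{a-1}\bigl((1-u)^{b-a}-u^{b-a}\bigr)$, which is positive on $(0,1/2)$ when $b>a$.

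For the tie statement, I would use Theorem~\ref{thm:main} together with the portmanteau theorem applied to the set $A=\{0\}$. We have $t_nD(\Lambda_n)\xrightarrow{d}W$, and the boundary of $\{0\}$ is $\{0\}$ itself. Since $W$ has the continuous density \eqref{eq:W-density}, $\PP(W=0)=0$, and therefore
\[
 \PP(D(\Lambda_n)=0)=\PP(t_nD(\Lambda_n)\in\{0\})\longrightarrow\PP(W=0)=0.
\]
This is exactly the claim that the proportion of partitions of $n$ with $N_{\cR}=N_{\cS}$ tends to zero.

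The only step needing care is the strictness of the inequality in the first part, meaning that the difference event has positive probability. That follows from the positivity of the gamma densities and involves no delicate estimate. Everything else is a direct consequence of results already stated.
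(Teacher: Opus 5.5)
Your proposal is correct and follows the paper's proof. You use the same coupling $G_b=G_a'+H$ with $H\sim\operatorname{Gamma}(b-a,1)$ together with $I_{1/2}(a,b)+I_{1/2}(b,a)=1$ for the sign, and the portmanteau theorem plus the absence of an atom of $W$ at $0$ for ties. The only differences are minor. You justify the strict inequality more carefully, via positive probability of the difference event, where the paper cites only $H>0$ almost surely. You also apply the continuity-set form of portmanteau directly to $A=\{0\}$, instead of the paper's bound by the closed sets $\{|w|\le\delta\}$ followed by $\delta\downarrow0$; both are valid.
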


\begin{proof}
Suppose first that $r<s$, or equivalently $a<b$.  Let $G_a'$ have the same
distribution as $G_a$, and let $H\sim\operatorname{Gamma}(b-a,1)$ be
independent of both.  Additivity of independent gamma variables with the
same rate means that their shapes add under summation: the density of their
sum is again a gamma density, with shape equal to the sum of the two shapes.
It gives
$G_a'+H\sim\operatorname{Gamma}(b,1)$, so we may realize
$G_b=G_a'+H$.  Since $H>0$ almost surely,
\[
 \PP(G_a<G_b)>\PP(G_a<G_a')=\frac12;
\]
the last equality follows because $G_a$ and $G_a'$ are independent,
identically distributed, and continuous.  Theorem~\ref{thm:main} identifies
the left-hand side with $I_{1/2}(a,b)$.  If $r>s$, interchange $a$ and $b$
and use
\[
 I_{1/2}(a,b)+I_{1/2}(b,a)=1,
\]
which follows either from the beta density under $x\mapsto1-x$ or from
$\PP(G_a<G_b)+\PP(G_b<G_a)=1$.

For the tie probability, fix $\delta>0$.  The event
$\{D(\Lambda_n)=0\}$ is contained in
$\{|t_nD(\Lambda_n)|\leq\delta\}$.  The latter is a closed interval event,
so the portmanteau theorem gives
\[
 \limsup_{n\to\infty}\PP(D(\Lambda_n)=0)
 \leq\limsup_{n\to\infty}
 \PP(|t_nD(\Lambda_n)|\leq\delta)
 \leq\PP(|W|\leq\delta).
\]
Letting $\delta\downarrow0$ proves the claim because the continuous random
variable $W$ has no atom at zero.
\end{proof}

\subsection{The special case \texorpdfstring{$s=m$}{s=m}}

\begin{remark}
If $b=1$, then
\[
 I_{1/2}(a,1)=\int_0^{1/2}a u^{a-1}\,du=2^{-a}.
\]
Thus \eqref{eq:bias-main} reduces to $2^{-r/m}$, the value predicted in
\cite{LiZhao2026}.  When $r=1$, this is the constant $2^{-1/m}$ obtained in
\cite{KimKim2021}.
\end{remark}

\subsection{Fixed part sizes and the finite-set formula}

The conditioning method also gives a transparent interpretation of the
finite-set formula in Li and Zhao's Theorem 1.  We begin with a standard
finite-dimensional limit for unrestricted random partitions.

\begin{proposition}\label{prop:fixed-parts}
Let $k_1,\ldots,k_d$ be fixed distinct positive integers.  Under the uniform
measure on $\mathcal P(n)$,
\begin{equation}\label{eq:finite-parts-exp}
 (t_nN_{\{k_1\}}(\Lambda_n),\ldots,
  t_nN_{\{k_d\}}(\Lambda_n))
 \xrightarrow{\ d\ }(Y_{k_1},\ldots,Y_{k_d}),
\end{equation}
where the $Y_{k_j}$ are independent exponentials of respective rates
$k_j$.  An exponential variable of rate $k>0$ is a positive random variable
with density $k\e^{-ky}$ for $y>0$, or equivalently with tail probability
$\PP(Y_k>x)=\e^{-kx}$; its mean is $1/k$.
\end{proposition}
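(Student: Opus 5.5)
The plan is to copy the two-step scheme used for Theorem~\ref{thm:main}: first find the limit in the unconditioned Boltzmann model, then show by the deconditioning argument of Section~\ref{sec:conditioning} that conditioning on $T=n$ at $t=t_n$ leaves it unchanged.

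For the unconditioned step, under $\PP_t$ the variables $X_{k_1},\ldots,X_{k_d}$ are independent geometric variables with parameters $\rho_{k_j}=\e^{-tk_j}$. By \eqref{eq:single-geometric-pgf}, for real $\theta_1,\ldots,\theta_d$ the joint characteristic function of $(tX_{k_1},\ldots,tX_{k_d})$ is
\[
 \prod_{j=1}^d\frac{1-\e^{-tk_j}}{1-\e^{-t(k_j-i\theta_j)}}
 \longrightarrow\prod_{j=1}^d\frac{k_j}{k_j-i\theta_j}
 \qquad(t\downarrow0).
\]
This follows since both numerator and denominator are $t$ times their linear coefficients plus $O(t^2)$, and the convergence is locally uniform in $\theta$. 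The right side is the characteristic function of independent exponentials of rates $k_j$, and it never vanishes.

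For the conditioning step, I will repeat the Fourier inversion \eqref{eq:conditional-cf-ratio} with the joint characteristic function
\[
 \tilde J_t(\theta,\varphi)=\EE_t\exp\Bigl(i\sum_j\theta_jtX_{k_j}+i\varphi T\Bigr).
\]
It is again the product \eqref{eq:joint-product}, now with shifts $\alpha_k=\theta_jt$ for $k=k_j$ and $\alpha_k=0$ otherwise, so $|\alpha_k|\leq Mt$ as before. The proof of Lemma~\ref{lem:Fourier-estimates} used only this bound on the shifts, not the particular pattern $\varepsilon_k$. In fact the situation is easier: only $d$ indices carry a nonzero shift, so the mixed sums \eqref{eq:first-mixed-bound}--\eqref{eq:second-mixed-bound} are even $O(t^{-1})$ and $O(t^{-2})$. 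This gives the central expansion
\[
 \e^{-iut^{3/2}\mu(t)}\,\tilde J_t(\theta,ut^{3/2})/\tilde J_t(\theta,0)=\exp\bigl(-\pi^2u^2/6+o(1)\bigr),
\]
together with the Gaussian domination and the complementary-arc bound \eqref{eq:minor-arc-bound}, uniformly for $\theta$ in compact subsets of $\mathbb R^d$. The argument of Proposition~\ref{prop:conditioning} then gives
\[
 \EE_{t_n}\Bigl(\e^{i\sum_j\theta_jt_nX_{k_j}}\Bigm|T=n\Bigr)=\tilde J_{t_n}(\theta,0)\bigl(1+o(1)\bigr),
\]
using that $\tilde J_t(\theta,0)$ is nonzero.

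By \eqref{eq:conditioning-identity}, the conditional law of $(X_{k_j})_j$ given $T=n$ is the law of $(N_{\{k_j\}}(\Lambda_n))_j$. The multivariate Lévy continuity theorem then yields \eqref{eq:finite-parts-exp}.

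The only point needing care is to check that the proof of Lemma~\ref{lem:Fourier-estimates} really goes through with a vector $\theta$ and a different shift pattern. The relevant step there is the inequality $|\cos(x+y)-\cos x|\leq|y|$ on the minor arc, which costs only $O(dMt)$ here. I therefore expect no genuine obstacle, and the proof can be presented as this modification.
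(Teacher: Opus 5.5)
Your proposal is correct and follows essentially the same route as the paper. There are three steps: an unconditioned limit for the independent geometric multiplicities, then a rerun of Lemma~\ref{lem:Fourier-estimates} and Proposition~\ref{prop:conditioning} with shifts $\alpha_k$ supported on the finitely many indices $k_j$, and finally the conditioning identity \eqref{eq:conditioning-identity} together with L\'evy's continuity theorem. The only difference is cosmetic: you compute the joint characteristic function $\prod_j k_j/(k_j-i\theta_j)$ directly, while the paper computes the tails $\PP_t(tX_{k_j}>x)\to\e^{-k_jx}$.
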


\begin{proof}
First work under the unconditioned measure $\PP_t$.  For $x\geq0$,
\[
 \begin{aligned}
 \PP_t(tX_{k_j}>x)
 &=\PP_t\left(X_{k_j}\geq\left\lfloor\frac{x}{t}\right\rfloor+1\right)\\
 &=\exp\left[-tk_j
 \left(\left\lfloor\frac{x}{t}\right\rfloor+1\right)\right]
 \longrightarrow\e^{-k_jx}.
 \end{aligned}
\]
Thus $tX_{k_j}$ converges to an exponential variable of rate $k_j$.
Because the $X_{k_j}$ are independent for every $t$, the vector converges
to independent exponentials.

It remains to condition on $T=n$.  One may insert the finite marking factor
\[
 \exp\left(i t\sum_{j=1}^d\theta_jX_{k_j}\right)
\]
in the joint characteristic function \eqref{eq:J-def}.  More explicitly,
for $(\theta_1,\ldots,\theta_d)$ in a fixed compact set, the phase at $k$
is changed by
\[
 \alpha_k=t\sum_{j=1}^d\theta_j\1_{\{k=k_j\}},
 \qquad |\alpha_k|\leq Mt,
\]
for a constant $M$ independent of $t$.  Only the finitely many phases
corresponding to $k_1,\ldots,k_d$ are changed.  The proof of
Lemma~\ref{lem:Fourier-estimates} uses the phase shifts only through this
bound and through $\sum_k|\alpha_k|=O(t)$; hence its central-arc Taylor
expansion and complementary-arc bound remain valid uniformly on the compact
set.  Repeating the numerator and denominator Fourier inversions in
Proposition~\ref{prop:conditioning} shows that conditioning on $T=n$ does
not change the limiting joint characteristic function.  Finally use
\eqref{eq:conditioning-identity} to identify the conditioned multiplicities
with those of $\Lambda_n$.  This proves \eqref{eq:finite-parts-exp}.
\end{proof}

If $R$ and $S$ are finite disjoint sets, summing the corresponding
coordinates in Proposition~\ref{prop:fixed-parts} and using the fact that
two independent sums of continuous random variables are equal with
probability zero gives
\begin{equation}\label{eq:finite-probability-interpretation}
 \lim_{n\to\infty}
 \PP\left(N_R(\Lambda_n)>N_S(\Lambda_n)\right)
 =\PP\left(\sum_{r\in R}Y_r>\sum_{s\in S}Y_s\right).
\end{equation}

Although the left-hand side of
\eqref{eq:finite-probability-interpretation} concerns unrestricted
partitions, the probability on the right also equals the volume ratio that
arises in the finite allowed-parts model.  To see why, let
$A=R\cup S\cup I$ be finite and set $Z_k=kY_k$ for $k\in A$.  Since
$Y_k\sim\operatorname{Exp}(k)$, the variables $Z_k$ are independent
$\operatorname{Exp}(1)$ variables.  Their normalized vector
\[
 U_k=\frac{Z_k}{\sum_{j\in A}Z_j},\qquad k\in A,
\]
is uniform on the simplex
$\{(u_k):u_k\geq0, \sum_{k\in A}u_k=1\}$; equivalently, it has the
$\operatorname{Dirichlet}(1,\ldots,1)$ distribution.  Thus the Dirichlet
name here introduces no additional weighting: equal-volume regions of the
simplex have equal probability.  Moreover, the event
in \eqref{eq:finite-probability-interpretation} is homogeneous and can be
written as
\[
 \sum_{r\in R}\frac{U_r}{r}>
 \sum_{s\in S}\frac{U_s}{s}.
\]
This is exactly the half-space condition obtained after scaling a restricted
partition multiplicity vector by
$u_k=kN_{\{k\}}/n$.  The diagonal change of variables has constant
Jacobian, which cancels from the numerator-to-denominator volume ratio.
Hence the probability involving these sums of independent exponential
variables in
\eqref{eq:finite-probability-interpretation} equals Li and Zhao's limiting
polytope-volume ratio.

One can also recover their explicit expression directly.  Write
$R=\{r_1,\ldots,r_\ell\}$ and put
$U=\sum_{i=1}^\ell Y_{r_i}$.  Partial fractions applied to
$\prod_i r_i/(z+r_i)$ give
\[
 \PP(U>x)=\sum_{i=1}^\ell A_i\e^{-r_ix},
 \qquad
 A_i=\prod_{j\ne i}\frac{r_j}{r_j-r_i}.
\]
If $V=\sum_{s\in S}Y_s$, independence and the Laplace transform
$\EE\e^{-zY_s}=s/(s+z)$ yield
\begin{equation}\label{eq:hypoexponential-formula}
 \begin{aligned}
 \PP(U>V)
 &=\EE\bigl[\PP(U>V\mid V)\bigr]\\
 &=\sum_{i=1}^\ell
 \left(\prod_{j\ne i}\frac{r_j}{r_j-r_i}\right)
 \left(\prod_{s\in S}\frac{s}{s+r_i}\right).
 \end{aligned}
\end{equation}
The Laplace transform of a nonnegative random variable $Y$ is the function
$z\mapsto\EE(\e^{-zY})$ for $z\geq0$; independence turns the transform of a
sum into the product of the transforms.  A sum of independent exponential
variables with possibly different rates is called hypoexponential.  Thus
\eqref{eq:hypoexponential-formula} is the hypoexponential form of the
finite-set formula; rearranging its products gives the expression in
\cite[Theorem 1]{LiZhao2026}.

This interpretation also makes their proposed bias criterion immediate.  If
$R=\{r_1,\ldots,r_d\}$ and $S=\{s_1,\ldots,s_d\}$ can be indexed so that
$r_j<s_j$ for every $j$, then the sum over $R$ strictly stochastically
dominates an independent copy $V'$ of the sum $V$ over $S$.  Here strict
stochastic dominance means that $\PP(U>x)\geq\PP(V'>x)$ for every $x$, with
strict inequality on some interval.  Indeed, an exponential variable of
rate $r_j$ strictly stochastically dominates one of rate $s_j$, and the
coordinatewise coupling preserves this order after summation.  If $V$ is
the independent sum appearing in
\eqref{eq:finite-probability-interpretation}, then
\[
 \PP(U>V)\geq\PP(V'>V)=\frac12,
\]
and the strict tail inequality makes the first inequality strict.

\subsection{Extensions}

The preceding arguments also suggest extensions, but we record them only as
remarks rather than as additional theorems.  If finitely many new fixed part
sizes are added to either progression, the finite-dimensional limit in
Proposition~\ref{prop:fixed-parts} suggests that the limiting variable is
augmented by a finite sum of independent exponential variables.  If fixed
part sizes already present in a progression are removed, one must instead use
the joint convergence in Proposition~\ref{prop:fixed-parts}; the removed
coordinates need not be independent of the remaining coordinates.  In either
case, the corresponding conditional estimate can be obtained by repeating the
finite-marking argument there.

More generally, a signed statistic formed from several residue classes
modulo the same $m$ has an unconditioned characteristic function that is a
finite product of $q$-Pochhammer quotients.  A rigorous extension would need
to state the coefficient-balance condition under which the powers of
$1-q$ cancel and to repeat the uniform Fourier estimates.  In the present
two-progression case this balance is automatic, and the limiting comparison
reduces to two gamma variables whose normalized ratio is beta distributed.

Two natural refinements remain.  First, quantitative versions of the
Fourier estimates should yield a rate of convergence in
\eqref{eq:bias-main}.  Second, allowing the modulus or the number of
marked residue classes to vary with $n$ would require uniform forms of both
the $q$-gamma asymptotic and the deconditioning estimate.  These questions
lie beyond the fixed-parameter regime considered here.

\begin{remark}[On the coprimality normalization]
The condition $\gcd(r,s,m)=1$ is retained in the statement of the main
theorem only because the finite-set theorem of Li and Zhao used in
Proposition~\ref{prop:finite-limit} assumes that the marked part sizes have
greatest common divisor one.  The Boltzmann, $q$-gamma, and conditioning
arguments for the unrestricted partition model do not use this condition.
For the present truncations the full allowed set is $\{1,\ldots,M_N\}$,
whose greatest common divisor is always one, even when $\gcd(r,s,m)>1$.
Thus a general lattice-point version of the finite-set theorem, formulated
under the weaker hypothesis $\gcd(A)=1$ for the full allowed set $A$,
should remove the normalization and leave the same beta constant.  Since
that generalized finite-set lemma is not proved here, we keep the
normalization in the formal theorem rather than silently enlarging the
range of the cited result.
\end{remark}

\section*{Acknowledgment}

The authors thank OpenAI's ChatGPT for assistance with language editing,
exposition, and exploratory discussions concerning the mathematical arguments.
All mathematical statements, proofs, and final revisions were independently
checked and are the sole responsibility of the authors.

\end{document}